\documentclass[5p,twocolumn]{elsarticle}

\usepackage[utf8]{inputenc}
\usepackage{graphicx}
\usepackage{amsmath,amssymb,amsfonts,bm}
\usepackage{subcaption}
\usepackage[utf8]{inputenc}
\usepackage{ntheorem}
\usepackage{microtype}
\usepackage{enumitem}
\usepackage{xcolor}
\usepackage{accents}
\usepackage{hyperref}
\usepackage{bbm}

\hypersetup{
    colorlinks=true,
    linkcolor=blue,
    filecolor=magenta,      
    urlcolor=cyan,
    }
\theoremseparator{:}
\newtheorem{defi}{\indent Definition}

\newtheorem{theo}{\indent Theorem}
\newtheorem{Lemm}{\indent Lemma}

\newtheorem{Assum}{\indent Assumption}
 
\newtheorem{rem}{\indent Remark}
\newenvironment{pf}{\textit{Proof:}}{}

\newcommand{\juan}[1]{\textcolor{black}{#1}}

\begin{document}

\begin{frontmatter}

\title{A Mode-wise Adaptive Observer for Autonomous Switched Nonlinear Systems\tnoteref{t1}} 
\tnotetext[t1]{The material in this paper was not presented at any conference.}

\author[UoI]{Juan Sereno}

\author[UoI,IMCI]{E.A. Hernandez-Vargas\corref{cor1}}
\ead{esteban@uidaho.edu}

\cortext[cor1]{Corresponding author}

\address[UoI]{Department of Mathematics and Statistical Science, University of Idaho, Moscow, Idaho, 83844-1103, USA}
\address[IMCI]{Institute for Modeling Collaboration and Innovation (IMCI), University of Idaho, Moscow, Idaho, USA}
  
\begin{keyword}
Adaptive observers \sep Switched systems \sep  Dwell-time \sep State-parameter estimation \sep Digital twins
\end{keyword}

\begin{abstract}
In the digital twin paradigm, online parameter updating is essential\juan{. Unlike} a static model, a digital twin must continuously adapt to the evolving dynamics of the system it represents. Adaptive observers, which jointly estimate states and parameters from online data, are therefore an increasingly important tool. In this work, we formulate a mode-wise adaptive observer for a class of autonomous switched nonlinear systems with switched unknown parameters. The main challenges are twofold: removing the disturbance that switching injects into the parameter-error dynamics and guaranteeing persistence of excitation over a finite-time window without relying on an input signal. To address them, we assign a dedicated adaptive observer to each mode, active only on its corresponding interval, which directly removes the zero-input disturbance caused by switching. We then introduce a finite-window persistence-of-excitation condition together with a minimum dwell-time condition, under which the parameter estimation error is contractive and the state estimation error is bounded within the active interval of each mode. \juan{The performance of the proposed approach is illustrated with an academic example and with a practical example of evolutionary therapies.}
\end{abstract}

\end{frontmatter}

\section{Introduction}\label{s:introduction} 

Digital twins are virtual replicas that evolve alongside their physical counterparts and have moved from manufacturing origins~\cite{grieves16} into the medical field, where they claim improved patient-specific prediction and therapy design~\cite{NASEM23,bjornsson19,laubenbacher24,niarakis24}. A well-defined feature of a digital twin is online adaptation: the computational model must proactively track the ever-evolving dynamics of the physical twin~\cite{NASEM23}. Therefore, parameter updating is central to the paradigm. Current practice, however, relies heavily on batch-based identification, which delays the update until enough input-output data have been collected~\cite{paoletti07}. In the adaptive observer domain, one can find an appealing alternative since it performs joint and online state-parameter estimation; although it has not been thoroughly investigated within the digital twin framework~\cite{rodriguez25,sereno25i}.

The joint state-parameter estimation problem has been addressed mainly through two different avenues. In one hand, stochastic filtering converts the unknown parameters as augmented states and propagates them through Bayesian recursions, as in the extended, unscented, and ensemble Kalman filters~\cite{simon06,chen03,van01,evensen09,deng13,afshari17}; these methods are flexible and widely used, but their convergence guarantees are generally asymptotic and statistical in nature. Deterministic observer-based methods, by contrast, aim at provable convergence of the joint estimate~\cite{afri16,ortega21,marino92}. Adaptive observers belong to this second front: under suitable observability-identifiability and some class of excitation conditions, they deliver exponential parameter convergence with explicit error bounds, which suits well within the calibration framework of digital twins.

The theoretical foundations of adaptive observers can be traced back to the seminal works~\cite{kreisselmeier74,Luders73,Bastin88,ioannou96,ortega89}. In \cite{zhang02a}, an adaptive observer for linear time-varying systems is proposed, imposing conditions on the observability matrix and the persistency of excitation (PE) condition for the regressor to guarantee exponential convergence. The state-affine setting that we adopt in this work was developed in~\cite{hammouri90,besancon00,besancon06}, while extensions to nonlinearly parameterized systems can be found in~\cite{farza09,tyukin13,farza18}. \juan{In~\cite{ortega15}, the authors translate the reconstruction of the states into the estimation of the constant initial conditions of a suitably transformed system. This idea was later extended in~\cite{ortega21} and, more recently, to state-affine systems with uncertain parameters and unknown additive output disturbances~\cite{romero25}.} The role of PE in securing parameter convergence is itself classical~\cite{Boyd1986,Mareels1988,Narendra2012}, and recent efforts have sought to relax it toward initial- or interval-excitation (IE) conditions~\cite{aranovskiy15,tomei22}. \juan{The notion of excitation on a single finite interval goes back to~\cite{kreisselmeier90}, where richness is required only over a bounded time window rather than uniformly along the whole trajectory.} \juan{Recently, \cite{ortega26} proved that IE of the regressor is a sufficient condition for a standard least-squares estimator, augmented with one algebraic step, to attain finite convergence time.} 
\juan{Applications of these estimators span diverse domains as energy or healthcare, see~\cite{zhao14,wang14,rodriguez25}.}

\juan{Switched systems have been proved to be a solid foundation for modeling and control in a broad spectrum of applications \cite{hernandez11,deaecto10,lee08}.} Several identification strategies transform the switched dynamics into an AutoRegressive eXogenous (ARX) form~\cite{paoletti07,borchersen15,bencherki25}; these strategies focus on recovering the switching behavior, i.e., switching instants, number of subsystems, and switching signal reconstruction, but not the parameters of each mode. Dedicated switching observers have also been designed for specific estimation problems~\cite{aguado18,aguado21}. A separate line builds on the Dynamic Regressor Extension and Mixing (DREM) technique~\cite{aranovskiy16}, which relaxes the classical PE condition and decouples the estimation problem to enable element-wise parameter adaptation; \cite{pyrkin19} developed a DREM-based observer for nonlinear systems with constant parameters, and~\cite{liu23} extended these ideas to systems with switched unknown parameters by exploiting the zero-input responses to build the regressor. 

\juan{Despite these contributions, joint state and parameter estimation for switched systems remains overlooked~\cite{ortega22,glushchenko23,liu23,romero23}. In~\cite{liu23}, a DREM-based adaptive observer is designed for an input-driven nonlinear switched system with constant matrices, the switching induced zero-input response is treated by augmenting the parameter vector with the states at the switching instants (SASI) and taking advantage of the element-wise adaptation of the DREM technique to achieve asymptotic convergence. While in~\cite{romero23}, the authors combines a generalized parameter estimation-based observer with a least-squares plus DREM estimator to guarantee finite convergence time (FCT) within each sub-interval of a linear time-varying system at the cost of resetting part of the observer at every switch. Our proposal differs on three matters. First, the switched system is autonomous and nonlinear, so the switching signal is the sole source of excitation. Second, rather than augmenting the parameter vector or resetting a scheme, we suppress the zero-input disturbance structurally by assigning one adaptive observer per mode, active only on its own active interval. Finally, we consider a finite-window persistence-of-excitation condition and the dwell-time constraint to Produce a a contractive parameter estimation error explicitly governed by the length of the active interval of each subsystem. In a medical digital twin context, this allow for quantify how long a therapy (or mode), must be sustained for model recalibration.
}

Beyond estimation, the analysis, stabilization, and control of switched systems form a very active research line in which dwell-time constraints play a central role~\cite{liberzon03}. Recent contributions include the dwell-time stabilization of switched affine systems via Lyapunov--Metzler inequalities~\cite{russo25}, observer-based sampled-data stabilization under dwell-time constraints~\cite{katz26,cai26}, and the characterization of permanence regions and control-invariant sets for waiting-time switched systems~\cite{perez22,perez23,perez25}. These temporal notions are closely related to the dwell-time condition that we impose on the switching signal to guarantee the contractiveness on parameter estimation error.

The objective of this work is to develop a mode-wise adaptive observer for joint state and parameter estimation in autonomous switched nonlinear systems, where a switching signal, rather than an external input, is the sole driver of the dynamics. In contrast to \cite{liu23,romero23}, our design removes the zero-input disturbance caused by switching without relying on a control input. We \juan{exemplify} the approach on an HIV within-host model under a switched therapy schedule \cite{hernandez12,hernandez13s,sereno25}, extending adaptive observers into switched biological systems. 

\juan{\textbf{Notation.}}
\juan{Throughout, $\mathbb{R}$, $\mathbb{R}_{+}$, $\mathbb{N}$, and $\mathbb{N}_{0}$ denote the real numbers, the positive reals, the natural numbers, and the natural numbers including zero, respectively. $\mathbb{R}^{n}$ denotes the vector space of all $n-$tuples of real numbers. $\mathbb{R}^{n \times m}$ denotes the space of $n \times m$ matrices with real entries. For a given vector $v\in\mathbb{R}^{n}$ or matrix $A\in\mathbb{R}^{n\times m}$, $(\cdot)^{\top}$ denotes transposition, $\|v\|$ the Euclidean norm, $\|A\|$ the induced $2$-norm, and $\lambda_{\min}(A)$ denotes the smallest eigenvalue. $I_{n}$ is the identity matrix of $n \times n$ dimension. For a symmetric matrix $P \in \mathbb{R}^{n \times n}$, $P\succ0$ (respectively, $P\succeq0$) indicates that $P$ is positive definite (respectively, positive semidefinite). For $\Gamma\succ0$, the square weighted norm is $\|v\|^{2}_{\Gamma}:=v^{\top}\Gamma\,v$. Accent $\hat{(\cdot)}$ denotes an estimate, $\tilde{(\cdot)}$ the corresponding estimation error (i.e., $\tilde{x}\triangleq\hat{x}-x$), $\bar{(\cdot)}$ and $\underaccent{\bar}{(\cdot)}$ denotes an upper and lower bound, respectively.
}

\section{Background} \label{s:math_modeling}

Consider an autonomous nonlinear switched system given by,
\begin{subequations}\label{eq:asnl}
\begin{align}
\dot{x}(t) &= f_{\sigma} \left( x(t), \theta_{\sigma(t)} \right), \label{eq:asnl_state} \\
y(t) &= h\left( x(t)\right), \label{eq:asnl_output} \\
x(0) &= x_{0}, \label{eq:asnl_X0}
\end{align}
\end{subequations}
where $x \in \mathcal{X} \subset \mathbb{R}^{n_{x}}$ stand for the state vector and $y \in \mathcal{Y} \subset \mathbb{R}^{n_{y}}$ stand for the measurable output, with both $\mathcal{X}$, and $\mathcal{Y}$ being compact sets. The switching system (\ref{eq:asnl}) is assumed to have a finite number of subsystems (or modes), denoted as $n_{s} \in \mathbb{N}$, and the switching signal $\sigma(t)$ determines which subsystem is activated by setting the switched parameter vector $\theta_{\sigma(t)} \in \Theta \triangleq \{\theta_1, \theta_2, \ldots, \theta_{n_s}\}$, where $\theta_{i} \in \mathbb{R}^{n_p}$. \juan{Here, the switched signal is defined as follows.}
\juan{
\begin{defi}[Switching signal] \label{def:sw_signal}
    Consider the switched system~(\ref{eq:asnl}). The map,
    \begin{equation} 
    \sigma(t) : [0,\infty) \to \mathcal{M}, \quad \forall ~ t \in \mathbb{R}_{+}, \quad \text{with} ~ \mathcal{M} \subset \mathbb{N}, 
    \label{eq:sw_signal}
    \end{equation}
    is called the switching signal for~(\ref{eq:asnl}) if, for an ordered sequence of switching instants, denoted as $\{t_{k}\}_{k \in \mathbb{N}_{0}}$, with $t_{0} = 0$ and $t_{k} < t_{k+1}$, the switching signal $\sigma(t)$ is a known piecewise constant function such that $\sigma(t) = \ell$ for all $t \in [t_{k}, t_{k+1})$, where $\ell \in \mathcal{M}$ stand for a specific selected subsystem (or mode), and $\mathcal{M}$ stands for the finite set of available subsystems (or modes), $\mathcal{M} \triangleq \{1, 2, \ldots, n_s\}$. Accordingly, $\sigma(t)$ holds the property of parameter selection, with $\theta_{\sigma(t)} = \theta_{\ell}$, and $\dot{\theta}_{\ell}=0$, for $t \in [t_{k}, t_{k+1})$, in the switched system ~(\ref{eq:asnl}).
\end{defi}
}
Given a switching signal as in Definition~(\ref{def:sw_signal}), let $\sigma(t) = \ell$ on $[t_{k}, t_{k+1})$; then the system dynamics of~(\ref{eq:asnl}) are governed by
\begin{equation} 
\dot{x} = f_{\ell}\left( x, \theta_{\ell} \right), \quad \text{for} ~ t \in [t_k, t_{k+1}), \quad \text{with} ~ \ell \in \mathcal{M},
\label{eq:f_ell}
\end{equation}
where $f_{\ell}: \mathcal{X}\times\Theta_{\ell} \rightarrow \mathcal{X}$ is a sufficiently smooth self-map that represents the autonomous dynamics of mode $\ell$. For every realization $\sigma(t) = \ell$, we assume that the trajectory $x(t)$ remains in the compact set $\mathcal{X}$ and the output $y(t)$ in the compact set $\mathcal{Y}$, for all $t \in [t_k, t_{k+1})$. For many applications, the switching signal $\sigma(t)$ stands as the sole control action, representing an external manipulation, such as drugs/therapies in medicine, that allows reaching a desired control objective, such as an outcome for a particular patient \cite{sereno25}. 
In this context, the time interval $[t_k, t_{k+1})$ is crucial, as it represents the duration for which the current mode $\ell \in \mathcal{M}$ remains active and, therefore, the time interval during which the adaptive observer is able to estimate the $\theta_{\ell}$ parameters. Consequently, the active time is defined below.
\juan{
\begin{defi}[Active time] \label{def:Active_time}
    Consider the switched system~(\ref{eq:asnl}) under the switching signal~(\ref{eq:sw_signal}). Let $\sigma(t) = \ell$, with $\ell \in \mathcal{M}$, be the active mode during the time interval $[t_k, t_{k+1})$, with $k \in \mathbb{N}_{0}$. The active time interval of the subsystem (or mode)~$\ell$ denoted as $\mathrm{AT}_{\ell}$, is the elapsed time $\mathrm{AT}_{\ell} \triangleq t_{k+1} - t_k$, satisfying  $\mathrm{AT}_{\ell} > 0$.   
\end{defi}
}
\juan{The active time, as in Definition~(\ref{def:Active_time}), corresponds to the actual length of the time window $[t_k, t_{k+1})$ from which a specific active mode~$\ell$ remains active. Here, we consider a minimal duration constraint, named the dwell-time condition, on this active time interval as follows.}
\juan{
\begin{defi}[Dwell time] \label{def:dwell_time}
    Consider the switched system~(\ref{eq:asnl}) under the switching signal~(\ref{eq:sw_signal}). Let $\sigma(t) = \ell$, with $\ell \in \mathcal{M}$, be the active mode, which holds an active time $\mathrm{AT}_{\ell}$ as expressed in Definition~(\ref{def:Active_time}). The active time of the subsystem (or mode)~$\ell$ is subject to dwell-time constraints, if $\mathrm{AT}_{\ell}\geq T_{\textrm{min},\ell}$, for all $k \in \mathbb{N}_{0}$, with $T_{\textrm{min},\ell} > 0$.
\end{defi}
}
\juan{Noteworthy, while the active time corresponds to the period during which the subsystem remains active, the dwell-time condition is a minimum length constraint imposed on the switching signal realization that allows to guarantee the properties of the mode-wise adaptive observer.
} Under Definition~(\ref{def:sw_signal}), (\ref{def:Active_time}), and (\ref{def:dwell_time}), the joint state and switched-parameter estimation problem is stated as follows.

\smallskip
\noindent\textbf{Problem Formulation.}
Given an autonomous switched nonlinear system~(\ref{eq:asnl}), under a switching signal $\sigma(t)$ (as in Definition~(\ref{def:sw_signal})) and subject to dwell time constraints (as in Definition~(\ref{def:dwell_time})), design a subfamily of adaptive observers
%
\begin{equation}\label{eq:problem_observers}
    \Pi \;\triangleq\; \{\Pi_{\ell}\}_{\ell\in\mathcal{M}},
    \qquad
    \Pi_{\ell}:\;\mathcal{M} \times \mathcal{Y}\;\longrightarrow\;\mathcal{X}\times\Theta_{\ell},
\end{equation}
such that, for every active time interval $[t_k, t_{k+1})$ where $\sigma(t)=\ell$ (as in Definition~(\ref{def:Active_time})), the observer $\Pi_{\ell}$ produces a joint state-parameter estimate $\bigl(\hat{x}_{\ell}(t),\hat{\theta}_{\ell} (t)\bigr)$ from causal output data $\{y(s)\}_{s\in[t_{k},t]}$, while every inactive observer $\Pi_{\ell'}$, $\ell'\neq\ell$, holds its internal state. 

\smallskip
\subsection*{Work contributions}
While existing literature has already explored adaptive observer architectures for switched unknown parameters \cite{liu23,romero23}, our approach distinguishes itself by considering the case of autonomous switched nonlinear systems. Besides, in our proposal we address the problem of zero-input disturbance by tailoring a dedicated adaptive observer to each subsystem during its active interval. Lastly, we consider the assumption of finite-window persistence of excitation along with the dwell-time constraint for switched system to explicitly derive an equation that connects the parameter-bounded error contraction with the length of the active time interval of each subsystem. Finally, instead of aiming for finite-time convergence our proposal focuses on guaranteeing boundedness in the parameter and state estimation error, which may be valuable for complex computational models within the medical digital twin framework.

\begin{figure}[ht]
\centering
\includegraphics[width=1.0\columnwidth]{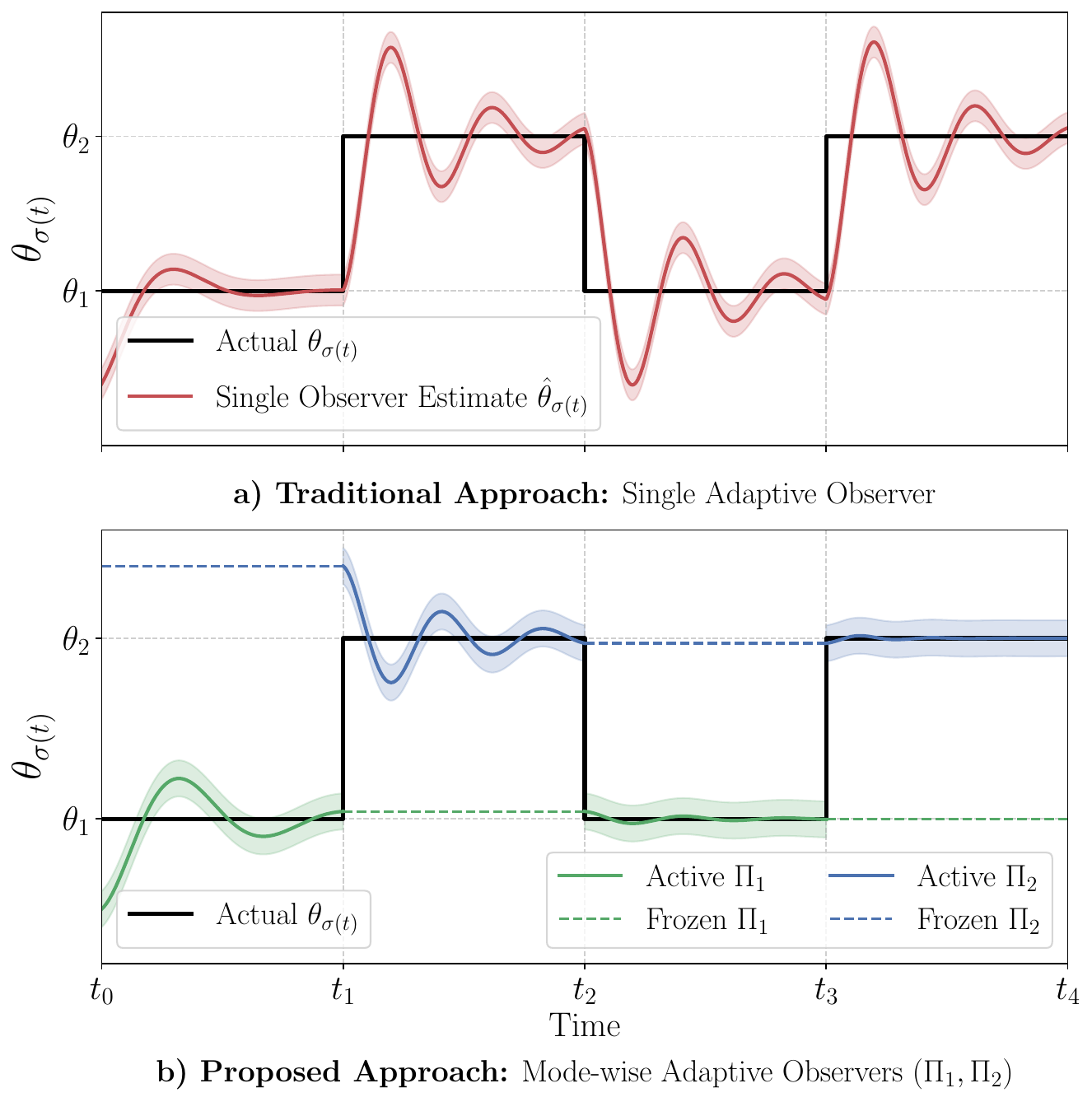}
	\caption{Mode-wise adaptive observer: illustrative scenario. Top graph shows a single adaptive observer applied for switched unknown parameter estimation; the black and red lines represent the actual and estimation values, respectively. Bottom graph shows the mode-wise approach $\Pi \;\triangleq\; \{\Pi_{1}, \Pi_{2}\}$, the green and blue lines represent the $\Pi_{1}$, and $\Pi_{2}$, respectively. Shadow areas represent confident intervals.}
	\label{fig:sigma}
\end{figure}

Figure~\ref{fig:sigma} illustrates the problem formulation for a scenario with two modes, $\sigma(t)\in\{1,2\}$, and switching instants $\{t_{1},t_{2},t_{3},t_{4}\}$. The top panel depicts the traditional single-observer approach. As one observer must track the parameter vector $\theta_{\sigma(t)}$ that changes at every switch, each switching instant injects a new error into the estimation dynamics that is never fully removed, producing the permanent bias visible in the estimate $\hat{\theta}(t)$. The bottom panel depicts the proposed mode-wise scheme, in which a dedicated observer is assigned to each mode and is active only on its corresponding interval. During $[t_{0},t_{1})$ we have $\sigma(t)=1$, so $\Pi_{1}$ is active and refines $\hat{\theta}_{1}$, although it may not reach steady state within a single active interval of duration $\mathrm{AT}_{1}$. At $t_{1}$ the signal switches to $\sigma(t)=2$: $\Pi_{1}$ freezes, holding its current estimate, while $\Pi_{2}$ activates from a tailored initial condition and estimates $\theta_{2}$ on $[t_{1},t_{2})$. At $t_{2}$ the signal returns to $\sigma(t)=1$: $\Pi_{2}$ freezes and $\Pi_{1}$ resumes from the value it held at $t_{1}$, continuing to refine $\hat{\theta}_{1}$. This activate--freeze--resume cycle repeats at each subsequent switch. Because every observer sees only its own mode, the zero-input disturbance that the switching would otherwise inject is removed, and each $\tilde{\theta}_{\ell}$ decreases across the, possibly non-contiguous, intervals on which mode $\ell$ is active.

\section{\juan{Preliminaries}} \label{s:Preliminaries}

In this section, we first present a remark on the observability and identifiability properties for each member function $f_{\ell}$, for all the subsystems $\ell \in \mathcal{M}$ \cite{xia03,villaverde19a}. 

\subsection{Observability analysis}

To access the observability of $f_{\ell}$, let us consider first the Lie derivatives definition as stated in \cite{nijmeijer90}.

\begin{defi}[Lie derivatives] \label{def:LieDeriva}
    The Lie Derivative of $h(x)$ with respect to $f_{\ell}(x, \theta_{\ell})$ is computed as
    \begin{equation}
        L_f h(x) = \frac{\partial h(x)}{\partial x} f_{\ell}(x, \theta_{\ell}),
    \end{equation}
    and, higher-order Lie derivatives can be computed recursively by
    \begin{equation}
        L_f^i h(x) = \frac{\partial L_f^{i-1} h(x)}{\partial x} f_{\ell}(x, \theta_{\ell}).
    \end{equation}
\end{defi}

Then, the observation space is defined as follows.

\begin{defi}[Observability matrix] \label{def:Obsmatrix}
    Consider the autonomous function $f_{\ell}$ given by (\ref{eq:f_ell}). The nonlinear observability matrix $\mathcal{O}_{\ell}$ of the subsystem $\ell \in \mathcal{M}$ can be generated by $h(x)$ and its Lie derivatives \cite{nijmeijer90} as follows:
\begin{equation} 
        \mathcal{O}_{\ell}(x) = \begin{pmatrix}
\frac{\partial}{\partial x} h(x) \\[10pt]
\frac{\partial}{\partial x} \left(L_f h(x)\right) \\[10pt]
\frac{\partial}{\partial x} \left(L_f^2 h(x)\right) \\[10pt]
\vdots \\[10pt]
\frac{\partial}{\partial x} \left(L_f^{n_{x}-1} h(x)\right)
\end{pmatrix},
\label{eq:Obsmatrix}
\end{equation}
in which $n_{x}$ stands for the state dimension of the switched system~(\ref{eq:asnl}).
\end{defi}

From Definition~(\ref{def:Obsmatrix}), the following Lemma is derived.
\begin{Lemm}[Local nonlinear observability] \label{Lemm:obs_f_ell}
    The autonomous function $f_{\ell}$ given by (\ref{eq:f_ell}), derived from the switched realization $\sigma(t) = \ell$, is locally observable at $x_0$, with $x_0 \neq 0$.
\end{Lemm}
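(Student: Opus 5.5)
The statement has no hypotheses on $f_{\ell}$ or $h$, so it cannot hold for arbitrary smooth data; take $f_{\ell}\equiv 0$ with $n_x\geq 2$, for which $\mathcal{O}_{\ell}$ has rank at most $n_y<n_x$. The plan is therefore to read Lemma~\ref{Lemm:obs_f_ell} as a verification that the local observability rank condition of~\cite{nijmeijer90} is satisfied at $x_0$ for the modes under consideration. The standard route is the rank test built on Definition~\ref{def:Obsmatrix}: it suffices to show that $\operatorname{rank}\mathcal{O}_{\ell}(x_0)=n_x$. If that holds, the sufficiency part of the Hermann--Krener theorem~\cite{nijmeijer90} gives local weak observability of~(\ref{eq:f_ell}) at $x_0$.

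\textbf{Step 1.} Fix $\ell\in\mathcal{M}$ and freeze $\theta_{\ell}$, which Definition~\ref{def:sw_signal} allows because $\dot{\theta}_{\ell}=0$ on $[t_k,t_{k+1})$. Then compute $h, L_f h,\dots,L_f^{n_x-1}h$ recursively as in Definition~\ref{def:LieDeriva}, and differentiate each with respect to $x$ to assemble $\mathcal{O}_{\ell}(x)$.

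\textbf{Step 2.} Show that $\mathcal{O}_{\ell}(x_0)$ has full column rank. For square $\mathcal{O}_{\ell}$ ($n_y=1$), I would compute $\det\mathcal{O}_{\ell}(x)$ symbolically. I expect it to be a polynomial or analytic expression that factors as a product of state components and nonzero parameters, so it vanishes only on a proper analytic subset containing the origin. For non-square $\mathcal{O}_{\ell}$, I would exhibit an $n_x\times n_x$ minor with that property.

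\textbf{Step 3.} Use continuity of the determinant and smoothness of $f_{\ell}$ and $h$ to conclude that the rank stays full on a neighbourhood of $x_0$ inside the compact set $\mathcal{X}$. This is what local observability requires.

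The main obstacle is Step 2. Without a concrete $f_{\ell}$ and $h$, full rank cannot be certified, so the proof must rely on the specific structure of the model. I would first try the determinant factorisation. If the symbolic computation becomes unwieldy for general $n_x$, I would fall back to a triangular (observer canonical) structure of the Lie-derivative map, in which each successive $L_f^i h$ introduces a new state component with a nonzero coefficient whenever $x_0\neq 0$. Both routes identify the degenerate set, which is where the hypothesis $x_0\neq 0$ comes from: if the degenerate set is not just the origin, the statement must further exclude the remaining singular points. Finally, repeat the argument for each $\ell\in\mathcal{M}$; the structure is shared across modes and only $\theta_{\ell}$ changes, so the same factorisation applies provided the relevant parameter entries are nonzero.
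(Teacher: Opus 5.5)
Your proposal follows the same route as the paper, which also reads Lemma~\ref{Lemm:obs_f_ell} as a model-specific check: it computes $\operatorname{rank}\mathcal{O}_{\ell}(x_0)$ for the systems actually used and invokes Theorem~3.32 of~\cite{nijmeijer90} (for the Hindmarsh--Rose model $\det\mathcal{O}=r-1\neq0$, and for the HIV model full rank is verified numerically with STRIKE-GOLDD). One remark: your guess about where the condition $x_0\neq0$ comes from does not hold in the Hindmarsh--Rose case, where the determinant does not depend on $x$ at all, so your Step~3 and the degenerate-set analysis are harmless but unnecessary there.
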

The proof of Lemma~(\ref{Lemm:obs_f_ell}) for the switched systems employed in this work was performed by computing $\text{rank}(\mathcal{O}_{\ell}(x_0))$ according to Theorem $3.32$ in \cite{nijmeijer90}. It is important to remark that the observability rank condition is a sufficient, but not strictly necessary, condition for nonlinear observability \cite{villaverde19a}. Moreover, a nontrivial computational cost is regarding Lie derivatives that need to be determined in order to test $\mathcal{O}_{\ell}$'s rank, since full rank may be achieved without computing all the $n_x - 1$ derivatives (from which the rank of $\mathcal{O}_{\ell}$ can no longer increase); the reader is referred to \cite{villaverde16,villaverde19b} for a more detailed discussion.

\subsection{Identifiability analysis}

The property of structural identifiability in the context of this work regards the possibility of giving unique values to the unknown subsystem parameters $\theta_{\ell}$ from available measurable outputs, Equation~(\ref{eq:asnl_output}), assuming noise-free and continuous-time longitudinal data. In the following, $y(x_0, \theta_{\ell})$ denotes the autonomous output map of the subsystem $f_{\ell}$ with respect to parameter set $\theta_{\ell}$. The structural local identifiability for one given component parameter $\theta_{i \ell} \in \theta_{\ell} \triangleq [\theta_{1 \ell}, \ldots, \theta_{n_p \ell}]^{T}$, with $i \in \{1, \ldots, n_p \}$, of the subsystem~(\ref{eq:f_ell}) is defined as follows.
\begin{defi} \label{def:param_Ident}
    \cite{walter97}. A parameter $\theta_i$ is structurally locally identifiable if for almost all $\theta \in \mathbb{R}_{+}$, there exists a neighborhood  $v(\theta)$ of $\theta$, such that:
    \begin{equation}
        \theta^{*} \in v(\theta), \quad y(\mathbf{x}_0, \theta) =   y(\mathbf{x}_0, \theta^{*}) \Rightarrow \theta^{*}_{i} = \theta_{i}.
    \end{equation}
\end{defi}
The following definition accounts for the general structural identifiability property of every component of the parameter vector $\theta_{\ell}$ of the subsystem~(\ref{eq:f_ell}).
\begin{defi} \label{def:model_Ident}
    \cite{walter97}. The subsystem~(\ref{eq:f_ell}) is structurally locally identifiable if all the parameters $\theta_{i \ell} \in \theta_{\ell}$, with $i \in \{1, \ldots, n_p \}$, are structurally locally identifiable.
\end{defi}
From Definition~(\ref{def:model_Ident}), the following Lemma is derived.
\begin{Lemm}[Local structural identifiability] \label{Lemm:ident_switched}
    The subsystem $f_{\ell}(x, \theta_{\ell})$ given by (\ref{eq:f_ell}) with a switched realization $\sigma(t) = \ell$, $\ell \in \mathcal{M}$, is structurally locally identifiable at $\theta^0_{\ell} \in \mathbb{R}^{n_p}$, with $\theta^0_{\ell} \neq 0$.
\end{Lemm}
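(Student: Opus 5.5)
The natural route mirrors how Lemma~(\ref{Lemm:obs_f_ell}) was handled: we recast structural identifiability as an observability property of an augmented system and verify a generic rank condition. For each mode $\ell \in \mathcal{M}$ we augment the state with the parameters, $z \triangleq [x^{\top}, \theta_{\ell}^{\top}]^{\top} \in \mathbb{R}^{n_x+n_p}$, and give the parameters trivial dynamics $\dot{\theta}_{\ell}=0$, which is admissible on $[t_k,t_{k+1})$ by Definition~(\ref{def:sw_signal}). The augmented vector field is $F_{\ell}(z) = [f_{\ell}(x,\theta_{\ell})^{\top}, 0^{\top}]^{\top}$ with output $y=h(x)$. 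By the observability--identifiability correspondence (the generalized observability--identifiability condition of~\cite{villaverde16,villaverde19a}), local weak observability of $z$ at $(x_0,\theta^0_{\ell})$ implies that every component $\theta_{i\ell}$ is structurally locally identifiable in the sense of Definition~(\ref{def:param_Ident}). Structural local identifiability of the whole subsystem, Definition~(\ref{def:model_Ident}), then follows.

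The key steps are as follows. First, for each $\ell$ we compute the Lie derivatives $L_{F_{\ell}}^{i} h$ as in Definition~(\ref{def:LieDeriva}), now with respect to the augmented field. Second, we assemble the augmented observability--identifiability matrix $\mathcal{O}_{I,\ell}(z)$ exactly as in~(\ref{eq:Obsmatrix}), differentiating with respect to $z$ and taking up to $n_x+n_p-1$ derivatives. In practice we stop as soon as full rank $n_x+n_p$ is reached, as remarked after Lemma~(\ref{Lemm:obs_f_ell}). Third, we evaluate $\operatorname{rank}\mathcal{O}_{I,\ell}(x_0,\theta^0_{\ell})$ and invoke Theorem~3.32 of~\cite{nijmeijer90}: full rank at $z_0$ gives local weak observability of the augmented system near $z_0$. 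In particular, two distinct parameter vectors in a neighborhood $v(\theta^0_{\ell})$ produce distinct outputs $y(x_0,\cdot)$, which is precisely the implication in Definition~(\ref{def:param_Ident}). Because the rank of an analytic matrix is generically constant, full rank at a single point gives full rank for almost all $\theta_{\ell}$, which matches the ``almost all'' clause. The hypothesis $\theta^0_{\ell}\neq 0$ (and $x_0\neq0$) keeps us away from the degenerate set where the relevant minors vanish, for instance where multiplicative parameters annihilate the dependence of $L^{i}_{F}h$ on other parameters. Repeating this for each $\ell\in\mathcal{M}$, which is a finite set, gives the lemma for every mode.

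The main obstacle is the third step. The lemma is stated for a generic class, but a rank computation is necessarily model-specific, so the proof can only be completed for the concrete subsystems used later, namely the academic example and the HIV therapy model. There, symbolic rank computation of $\mathcal{O}_{I,\ell}$ can become expensive, because the expressions of the high-order Lie derivatives grow quickly. To handle this, we would first try evaluating the rank at a random numerical point $(x_0,\theta^0_{\ell})$ with nonzero entries, since generic full rank follows from full rank at one point. If that fails, we would use a symbolic tool such as STRIKE-GOLDD~\cite{villaverde16} and identify the minimal number of Lie derivatives needed. If some parameter turns out to be unidentifiable, we would reparametrize $\theta_{\ell}$ into identifiable combinations before claiming the lemma.
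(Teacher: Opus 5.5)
Your proposal follows essentially the same route as the paper. You augment the state with $\theta_{\ell}$ (with $\dot{\theta}_{\ell}=0$), build the generalized observability--identifiability matrix from Lie derivatives as in~\cite{villaverde16}, and verify full rank $n_x+n_p$ model by model; the paper does this for the HIV model with the FISPO algorithm of STRIKE-GOLDD (rank $18$ after three Lie derivatives). The one loose point is your claim that $\theta^0_{\ell}\neq 0$ keeps you off the rank-deficient set, which is false in general since that set is a proper analytic subvariety rather than just the origin, so identifiability at the specific $\theta^0_{\ell}$ must rest on the rank actually evaluated there, as your third step already does.
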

The proof of Lemma~(\ref{Lemm:ident_switched}) for the switched systems employed in this work can be formed through several methods proposed in the literature, such as output equality, local state isomorphism, differential algebra, and identifiability tableaus, to mention some \cite{anstett20,chis11}. Another alternative is to test the structural identifiability by augmenting the state vector with the unknown parameters and generalizing an observability-identifiability matrix, as stated in \cite{villaverde16}.
In this work, we test the structural identifiability by augmenting the state vector with the unknown parameters and generalizing the observability-identifiability matrix, as stated in~\cite{villaverde16}.

\section{\juan{Main Results}}

Consider the set of subsystems represented by ~(\ref{eq:f_ell}), where $\sigma(t) = \ell$, $\ell \in \mathcal{M}$, for $t \in [t_k, t_{k+1})$, together with the measurable output (\ref{eq:asnl_output}), if by a change of coordinates \juan{(as in \cite{Rodriguez09})} the subsystem $f_{\ell}(x, \theta_{\ell})$ can be rewritten in a state-affine form linearly parameterized with respect to $\theta_{\ell}$ as,
\begin{equation}
\Sigma_{\ell} : \begin{cases} 
\dot{z}(t) = A(y)z(t) + \beta(y) + \Psi(y)\theta_{\ell}, \\ 
y(t) = C \cdot z(t), \quad \text{with} \quad t \in [t_k,~t_{k+1}).
\end{cases}
\label{eq:stateaffine}
\end{equation}
Where the components of the matrix $A(y)$, $\beta(y)$, and $\Psi(y)$ are uniformly bounded continuous functions depending only on $y$, since $u\equiv 0$ for all the switched modes. Furthermore, if the switched system~(\ref{eq:asnl}) allows it to be rewritten as in (\ref{eq:stateaffine}) for every switched mode $\ell \in \mathcal{M}$, then we can find a more general form that accounts for all the switched modes. Hence, the general state-affine switched form reads as,
\begin{equation}
\Sigma_{\sigma} : \begin{cases} 
\dot{z}(t) = A(y)z(t) + \beta(y) + \Psi(y)\theta_{\sigma(t)}, \\ 
y(t) = C \cdot z(t), \quad \text{with} \quad t \geq 0.
\end{cases}
\label{eq:swtiched_state_affine}
\end{equation}

For the joint estimation of states and parameters in autonomous switched systems like~(\ref{eq:asnl}) that are suitable to be rewritten in a state-affine switched form as stated by~(\ref{eq:swtiched_state_affine}), a family of adaptive observers can be designed. Let us first consider the following \juan{two} assumptions.

\begin{Assum} \label{assum:detect_condi} \cite{hammouri90}. There exists a bounded time-varying matrix $K(t)$ such that the system $\dot{\eta}(t) = (A(t,y) - K(t)C(t))\eta(t)$ is globally exponentially stable \cite{Rodriguez09,besancon06}.
\end{Assum}

\begin{Assum} \label{assum:IE_condi}
\cite{zhang02a,Roy2017}.
Let $\Lambda(t)$ be the solution of $\dot{\Lambda}=\bigl(A(y)-K(t)C\bigr)\Lambda+\Psi(y)$, with $\Lambda(t_{k})=0$, and define
\begin{equation}\label{eq:insta_infomatrix}
    N(t)\triangleq\Lambda^{\top}(t)C^{\top}Q(t)C\Lambda(t)\succeq 0,
\end{equation}
as the instantaneous information matrix at time $t$, where $Q(t)$ is a bounded symmetric positive matrix, and $\|N(t)\|\leq\bar{n}<\infty$ for all $t \geq 0$. Building on $N(t)$, let $\Gamma(t)=\Gamma(t)^{\top}\succ0$ denote the \emph{(accumulated) information matrix}, generated by the least-squares law with forgetting factor $\rho_{\theta}>0$,
\begin{equation}\label{eq:gamma_law}
   \dot{\Gamma}(t)=-\rho_{\theta}\,\Gamma(t)+N(t),
   \qquad \Gamma(t_{k})\succ0,
\end{equation}
which stays symmetric positive definite for all $t \geq 0$. Then, the accumulated information on the active interval $[t_{k},t_{k+1})$ is defined as
\begin{equation}\label{eq:Accum_infomatrix}
    \mathcal{N}_{\ell}(t_{k},t)\triangleq\int_{t_{k}}^{t} N(s)\,ds .
\end{equation}
Therefore, there exist constants $\delta_{\ell}>0$ and $T_{\textrm{min},\ell}>0$ \juan{(as given by the dwell-time Definition~(\ref{def:dwell_time}))} such that
\begin{equation}\label{eq:IEcondi}
    \mathcal{N}_{\ell}\bigl(t_{k},\,t_{k}+T_{\textrm{min},\ell}\bigr)
    \;\succeq\;\delta_{\ell}\,I .
\end{equation}
\end{Assum}

Inequality~(\ref{eq:IEcondi}) is denoted \juan{ here as the finite-time window excitation condition (motivated by \cite{kreisselmeier90,ortega26}) for the switched system~(\ref{eq:asnl}) subject to the dwell-time constraint as denoted in Definition~(\ref{def:dwell_time}). Then, the mode-wise adaptive observer $\Pi_{\ell}\in\Pi=\{\Pi_{\ell}\}_{\ell\in\mathcal{M}}$ assigned to the active subsystem $\ell$ on the interval $[t_{k},t_{k+1})$ could be expressed as}
\juan{
\begin{equation}\label{eq:adaptiveObs}
\Pi_{\ell}:\!
\begin{cases}
\begin{array}{lll}
\dot{\hat{z}} &=& A(y)\hat{z}+\beta(y)+\Psi(y)\hat{\theta}_{\ell}
   +\bigl[S^{-1}C^{\top} \\
   &&+\Lambda\Gamma^{-1}\Lambda^{\top}C^{\top}\bigr]Q\bigl(y-C\hat{z}\bigr),\\[3pt]
\dot{S} &=& -\rho_{z}\,S-A(y)^{\top}S-S\,A(y)+C^{\top}QC,\\[3pt]
\dot{\Lambda} &=& \bigl(A(y)-S^{-1}C^{\top}QC\bigr)\Lambda+\Psi(y),\\[3pt]
\dot{\Gamma} &=& -\rho_{\theta}\,\Gamma+\Lambda^{\top}C^{\top}QC\,\Lambda,\\[3pt]
\dot{\hat{\theta}}_{\ell} &=& \Gamma^{-1}\,\Lambda^{\top}C^{\top}Q\bigl(y-C\hat{z}\bigr),
\end{array}
\end{cases}
\end{equation}
}
\juan{
with $S(t_{k})\succ0$, $\Lambda(t_{k})=0$, and $\Gamma(t_{k})\succ0$. Here, $\hat{z}\in\mathbb{R}^{n_{x}}$ and $\hat{\theta}_{\ell}\in\mathbb{R}^{n_{p}}$ are the state and parameter estimates of mode~$\ell$, respectively, and $Q(t)=Q(t)^{\top}\succ0$ is a bounded weighting matrix. The gain matrix $K(t)$ is set as $K(t)=S^{-1}(t)C^{\top}Q(t)$, with $S(t)=S(t)^{\top}\succ0$ and $\rho_{z}>0$ as a forgetting factor. Under the Lemma~(\ref{Lemm:obs_f_ell}) the solution of $\dot{S}$ in (\ref{eq:adaptiveObs}) remains bounded and uniformly positive definite, so that Assumption~(\ref{assum:detect_condi}) is fulfilled. The auxiliary filter $\Lambda(t)$ and the information matrix $\Gamma(t)$ (generated by the least-squares law with a forgetting factor $\rho_{\theta}>0$) are those introduced in Assumption~(\ref{assum:IE_condi}). While subsystem~$\ell$ is active, $\Pi_{\ell}$ integrates~(\ref{eq:adaptiveObs}) from the initial conditions $\bigl(\hat{z}(t_{k}),\hat{\theta}_{\ell}(t_{k}),S(t_{k}),\Lambda(t_{k}),\Gamma(t_{k})\bigr)$, whereas every inactive observer $\Pi_{\ell'}$, $\ell'\neq\ell$, holds its internal state frozen until mode~$\ell'$ becomes active again. Then, the following theorem holds for a selected switched mode~$\ell$.
}

\begin{theo} \label{theo:modewise}
Consider a switched system~(\ref{eq:asnl}) that admits, for every mode $\ell\in\mathcal{M}$, the state-affine form $\Sigma_{\ell}$ in~(\ref{eq:stateaffine}); let Assumptions~(\ref{assum:detect_condi})-(\ref{assum:IE_condi}) hold. Fix a realization $\sigma(t)=\ell$ on the active interval $[t_{k},t_{k+1})$ and let $\Gamma(t)$ be the information matrix~(\ref{eq:gamma_law}) of Assumption~(\ref{assum:IE_condi}). \juan{Then the mode-wise adaptive observer $\Pi_{\ell}\in\Pi=\{\Pi_{\ell}\}_{\ell\in\mathcal{M}}$ given by~(\ref{eq:adaptiveObs}), with parameter error $\tilde{\theta}_{\ell}=\hat{\theta}_{\ell}-\theta_{\ell}$ and state error $\tilde{z}=\hat{z}-z$, has the following properties on $[t_{k},t_{k+1})$:}
\begin{enumerate}[label=(\roman*),leftmargin=2.2em]

\item \emph{(\textbf{\juan{Lyapunov dissipation}})} The weighted error energy $V_{\ell}(t)=\tilde{\theta}_{\ell}^{\top}\Gamma\tilde{\theta}_{\ell}$ is non-increasing along the excitation-driven dynamics; equivalently, $\lVert\tilde{\theta}_{\ell}(t)\rVert_{\Gamma}$ does not grow while mode $\ell$ is active.
 
\item \emph{(\textbf{\juan{Contractive parameter estimation}})} If the active time satisfies the dwell-time constraint $\mathrm{AT}_{\ell}\geq T_{\textrm{min},\ell}$, then
\begin{equation}\label{eq:theta_error_contraction}
   \bigl\lVert\tilde{\theta}_{\ell}(t_{k+1})\bigr\rVert_{\Gamma}
   \;\leq\;\rho_{\ell}\,
   \bigl\lVert\tilde{\theta}_{\ell}(t_{k})\bigr\rVert_{\Gamma},
   ~ \text{with}~\rho_{\ell}\in(0,1),
\end{equation}

where $\rho_{\ell}$ is non-increasing in $\mathrm{AT}_{\ell}$; \juan{in particular, a longer active interval yields a stronger contraction.}

\item \emph{(\textbf{\juan{Bounded state error}})} The state error satisfies $\lVert\tilde{z}(t)\rVert\leq\epsilon_{z}$ for all $t\in[t_{k},t_{k+1})$.
\end{enumerate}
The results hold for arbitrary initial conditions $z(t_{k}),\hat{z}(t_{k}),\hat{\theta}_{\ell}(t_{k})$ and any $\theta_{\ell}\in\mathbb{R}^{n_{p}}$.
\end{theo}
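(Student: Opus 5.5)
The proof will rest on the classical change of error coordinates used for state-affine adaptive observers \cite{zhang02a,besancon06}, namely
\begin{equation*}
\eta \triangleq \tilde{z}-\Lambda\tilde{\theta}_{\ell}.
\end{equation*}
Subtracting $\Sigma_{\ell}$ in~(\ref{eq:stateaffine}) from $\Pi_{\ell}$ in~(\ref{eq:adaptiveObs}) and using $y-C\hat z=-C\tilde z$ gives the state-error dynamics
\begin{equation*}
\dot{\tilde z}=(A-KC)\tilde z+\Psi\tilde\theta_{\ell}-\Lambda\Gamma^{-1}\Lambda^{\top}C^{\top}QC\tilde z,
\qquad K=S^{-1}C^{\top}Q .
\end{equation*}
Since $\dot{\theta}_{\ell}=0$ on the active interval, the parameter error obeys $\dot{\tilde\theta}_{\ell}=-\Gamma^{-1}\Lambda^{\top}C^{\top}QC\tilde z$. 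Combining these with the $\Lambda$-filter, the $\Psi\tilde\theta_{\ell}$ and $\Lambda\Gamma^{-1}\Lambda^{\top}\cdots$ terms cancel, and I expect
\begin{equation*}
\dot\eta=(A-KC)\eta .
\end{equation*}
By Assumption~(\ref{assum:detect_condi}) this system is exponentially stable, so $\|\eta(t)\|\le k_0e^{-\lambda_0(t-t_k)}\|\eta(t_k)\|$. The condition $\Lambda(t_k)=0$ gives $\eta(t_k)=\tilde z(t_k)$. Here the mode-wise architecture is used: because $\Pi_{\ell}$ only integrates while $\sigma=\ell$, the true parameter in its error equations is the constant $\theta_{\ell}$, and no switching-induced term enters. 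Rewriting the parameter error in the new coordinates gives
\begin{equation*}
\dot{\tilde\theta}_{\ell}=-\Gamma^{-1}\Lambda^{\top}C^{\top}QC\Lambda\tilde\theta_{\ell}-\Gamma^{-1}\Lambda^{\top}C^{\top}QC\eta .
\end{equation*}

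For (i), I will differentiate $V_{\ell}=\tilde\theta_{\ell}^{\top}\Gamma\tilde\theta_{\ell}$ using the law~(\ref{eq:gamma_law}). This gives
\begin{equation*}
\dot V_{\ell}=-\rho_{\theta}V_{\ell}-\tilde\theta_{\ell}^{\top}N\tilde\theta_{\ell}-2\tilde\theta_{\ell}^{\top}\Lambda^{\top}C^{\top}QC\eta .
\end{equation*}
The phrase ``excitation-driven dynamics'' in the statement will be interpreted as the $\eta$-free part, that is, the dynamics once the exponentially decaying transient $\eta$ is removed (or the case $\tilde z(t_k)=0$). On that part $\dot V_{\ell}\le-\rho_\theta V_{\ell}-\tilde\theta_{\ell}^{\top}N\tilde\theta_{\ell}\le0$. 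For the full dynamics I will keep the cross term and bound it by Young's inequality against half of $\tilde\theta_{\ell}^{\top}N\tilde\theta_{\ell}$. This yields $\dot V_{\ell}\le -\rho_\theta V_{\ell}-\tfrac12\tilde\theta_{\ell}^{\top}N\tilde\theta_{\ell}+c\|\eta\|^2$, where $c$ depends on the bounds of $Q$ and $C$. The additive term is integrable and decays exponentially.

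For (ii), the dwell-time condition places the whole window $[t_k,t_k+T_{\min,\ell}]$ inside the active interval, so the excitation condition~(\ref{eq:IEcondi}) applies. Integrating~(\ref{eq:gamma_law}) and using $\|N\|\le\bar n$ bounds $\Gamma$ on the window: $\Gamma(t)\preceq\bar\gamma I$ with $\bar\gamma=\|\Gamma(t_k)\|+\bar n/\rho_\theta$. The variation formula then gives the lower bound $\Gamma(t_k+T_{\min,\ell})\succeq e^{-\rho_\theta T_{\min,\ell}}\delta_{\ell}I$. Because $V_{\ell}$ is nonincreasing and $\Gamma$ is bounded, the quantity $\tilde\theta_{\ell}(t)$ cannot move far from $\tilde\theta_{\ell}(t_k)$ on the window. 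Integrating $\dot V_{\ell}$ then gives
\begin{equation*}
V_{\ell}(t_k+T)\le e^{-\rho_\theta T}V_{\ell}(t_k)-\tfrac12 e^{-\rho_\theta T}\!\int_{t_k}^{t_k+T}\!\tilde\theta_{\ell}^{\top}N\tilde\theta_{\ell}\,ds+c'\|\tilde z(t_k)\|^2 .
\end{equation*}
The integral is at least of order $\delta_{\ell}\|\tilde\theta_{\ell}(t_k)\|^2\ge(\delta_{\ell}/\bar\gamma)V_{\ell}(t_k)$, up to a correction controlled by the drift of $\tilde\theta_{\ell}$, which is handled as in the Kreisselmeier-type argument \cite{kreisselmeier90}. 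This gives the contraction with
\begin{equation*}
\rho_{\ell}^2=e^{-\rho_\theta \mathrm{AT}_{\ell}}\bigl(1-\kappa\delta_{\ell}/\bar\gamma\bigr)<1 .
\end{equation*}
Any extra time $\mathrm{AT}_{\ell}-T_{\min,\ell}$ only multiplies by additional factors $e^{-\rho_\theta(\cdot)}\le1$ coming from (i), so $\rho_{\ell}$ is nonincreasing in $\mathrm{AT}_{\ell}$. There is a complication: the residual term $c'\|\tilde z(t_k)\|^2$ prevents a pure contraction unless it is absorbed. My plan is either to state (ii) for the excitation-driven part, consistent with (i), or to absorb the residual into a composite Lyapunov function $V_{\ell}+\mu\,\eta^{\top}P\eta$.

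For (iii), I will write $\tilde z=\eta+\Lambda\tilde\theta_{\ell}$. Here $\eta$ decays, and $\Lambda$ is bounded because it is the output of an exponentially stable filter driven by the bounded $\Psi(y)$. Also $\|\tilde\theta_{\ell}\|\le\|\tilde\theta_{\ell}(t_k)\|_{\Gamma}/\sqrt{\lambda_{\min}(\Gamma)}$, with $\lambda_{\min}(\Gamma)\ge e^{-\rho_\theta \mathrm{AT}_{\ell}}\lambda_{\min}(\Gamma(t_k))$. Together these give $\epsilon_z=k_0\|\tilde z(t_k)\|+\bar\Lambda\,\|\tilde\theta_{\ell}(t_k)\|_{\Gamma}/\sqrt{\lambda_{\min}(\Gamma)}$. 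The main obstacle is step (ii): converting integral excitation into a uniform decrease of $V_{\ell}$ requires controlling how much $\tilde\theta_{\ell}$ drifts within the window, together with the $\eta$ cross term. I expect the constant $\kappa$ to depend on $\bar n$, $T_{\min,\ell}$ and $\bar\gamma$.
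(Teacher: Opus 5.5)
Your skeleton is the paper's. You use the same coordinates $\eta=\tilde z-\Lambda\tilde\theta_{\ell}$ with $\dot\eta=(A-KC)\eta$. You use the same $V_{\ell}=\tilde\theta_{\ell}^{\top}\Gamma\tilde\theta_{\ell}$ with $\dot V_{\ell}=-\rho_{\theta}V_{\ell}-\tilde\theta_{\ell}^{\top}N\tilde\theta_{\ell}$ along the $\eta$-free (homogeneous) part, which is exactly how the paper reads ``excitation-driven dynamics'' in (i). And you use the same split $\tilde z=\eta+\Lambda\tilde\theta_{\ell}$ for (iii).

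The real difference is in (ii). The paper never faces what you call the main obstacle. It integrates $\dot V_{\ell}\le-\rho_{\theta}V_{\ell}$ to get $\rho_{\ell}=e^{-\rho_{\theta}\mathrm{AT}_{\ell}/2}$, which is obviously decreasing in $\mathrm{AT}_{\ell}$, and it invokes the excitation condition only to claim a uniform lower bound on $\Gamma$. Your integrated inequality already gives this once you drop the nonpositive term $-\tfrac12 e^{-\rho_\theta T}\int\tilde\theta_{\ell}^{\top}N\tilde\theta_{\ell}\,ds$. So the Kreisselmeier drift estimate, which you leave unexecuted, is not needed for the statement as written.

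What your refinement buys is a factor $1-\kappa\delta_{\ell}/\bar\gamma$ in which excitation genuinely matters. The paper's $\rho_{\ell}$ arises purely from forgetting, and its derivation would go through verbatim with $N\equiv0$. In that case $\tilde\theta_{\ell}$ stays frozen and $V_{\ell}$ shrinks only because $\Gamma$ does, so the paper's bound by itself certifies no Euclidean convergence. If you carry the drift estimate out, note that $\dot{\tilde\theta}_{\ell}$ contains $\Gamma^{-1}$. Hence $\kappa$ also depends on a lower bound for $\Gamma$ on the window, e.g. $e^{-\rho_{\theta}T_{\textrm{min},\ell}}\lambda_{\min}(\Gamma(t_{k}))$, not only on $\bar n$, $T_{\textrm{min},\ell}$ and $\bar\gamma$.

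Your worry about the residual $c'\lVert\tilde z(t_k)\rVert^{2}$ is justified, and here you are more careful than the paper, which appeals to a cascade result to assert that $\rho_{\ell}<1$ survives the vanishing forcing. For arbitrary $\tilde z(t_k)$, no pure contraction of $\lVert\tilde\theta_{\ell}\rVert_{\Gamma}$ is possible. Take $\tilde\theta_{\ell}(t_k)=0$ and $\tilde z(t_k)=\eta(t_k)\neq0$: the right side of (ii) vanishes, yet the forcing $\Gamma^{-1}\Lambda^{\top}C^{\top}QC\eta$ becomes active as soon as $\Lambda$ leaves zero and generically makes $\tilde\theta_{\ell}(t_{k+1})\neq0$.

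So your first option, stating (ii) for the $\eta$-free part or for $\tilde z(t_k)=0$, is what both arguments actually establish. The composite function $V_{\ell}+\mu\eta^{\top}P\eta$ contracts that sum, not $\lVert\tilde\theta_{\ell}\rVert_{\Gamma}$ alone, so it would not recover (ii) as stated.

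Finally, your $\epsilon_z$ is a genuine supremum over the interval. It also includes the factor $\lambda_{\min}(\Gamma)^{-1/2}$ needed to pass from the $\Gamma$-norm to the Euclidean norm. The paper's expression instead evaluates the decay factors at the end of the interval and mixes the two norms, so yours is the sounder bound. For the full dynamics, add the term coming from the residual.
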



\begin{pf}
We omitted time dependency for the sake of simplicity. Suitably replacing the expression of $\dot{\hat{\theta}}_{\ell}$ into $\dot{\hat{z}}$ in~(\ref{eq:adaptiveObs})
yields the expression,
\begin{equation}\label{eq:zhat_compact}
   \dot{\hat{z}}=A(y)\hat{z}+\beta(y)+\Psi(y)\hat{\theta}_{\ell}
   +K\bigl(y-C\hat{z}\bigr)+\Lambda\dot{\hat{\theta}}_{\ell}.
\end{equation}
As $\tilde{z}=\hat{z}-z$, $\tilde{\theta}_{\ell}=\hat{\theta}_{\ell}-\theta_{\ell}$,
and $\dot{\theta}_{\ell}=0$ on $[t_{k},t_{k+1})$ (Definition~\ref{def:sw_signal}), the error dynamics read
\begin{align}
   \dot{\tilde{z}} &= \bigl(A(y)-KC\bigr)\tilde{z}+\Psi(y)\tilde{\theta}_{\ell}+\Lambda\dot{\tilde{\theta}}_{\ell},
   \label{eq:ztilde}\\
   \dot{\tilde{\theta}}_{\ell} &= -\,\Gamma^{-1}\,\Lambda^{\top}C^{\top}QC\,\tilde{z}.
   \label{eq:thetatilde}
\end{align}
\underline{\juan{\textit{Step 1}}}. Consider a linear combination of $\tilde{z}$ and $\tilde{\theta}_{\ell}$, as defined by 
\juan{
\begin{equation}\label{eq:linear_combi}
   \eta\triangleq\tilde{z}-\Lambda\tilde{\theta}_{\ell},
\end{equation}
}
which, by differentiating it and using~(\ref{eq:ztilde}) together with the filter $\dot{\Lambda}$ defined in Assumption~(\ref{assum:IE_condi}),
\juan{
\begin{align}
   \dot{\eta}
   &=\dot{\tilde{z}}-\dot{\Lambda}\tilde{\theta}_{\ell}-\Lambda\dot{\tilde{\theta}}_{\ell}
    =\bigl(A(y)-KC\bigr)\tilde{z}+\Psi(y)\tilde{\theta}_{\ell}-\dot{\Lambda}\tilde{\theta}_{\ell}\nonumber\\
   &=\bigl(A(y)-KC\bigr)\tilde{z}-\bigl(A(y)-KC\bigr)\Lambda\tilde{\theta}_{\ell}\nonumber\\
   &=\bigl(A(y)-KC\bigr)\eta.
   \label{eq:eta_dyn}
\end{align}
}
\juan{Hence, the dynamic of~(\ref{eq:linear_combi})} depends only on the pair $\bigl(A(y),C\bigr)$ and the gain $K$, \juan{that is to say}, it is independent of $\sigma(t)$ and $\theta_{\ell}$ and evolves for all $t\geq0$. By Assumption~(\ref{assum:detect_condi}) the origin is globally exponentially stable, \juan{ i.e., there exist $c_{0}$, and $\lambda>0$ for which 
\begin{equation} \label{eq:eta_vanish}
\lVert\eta(t)\rVert\leq c_{0}e^{-\lambda(t-t_{k})}\lVert\eta(t_{k})\rVert,
\end{equation}
is satisfies during the active time interval $[t_{k},t_{k+1})$.
}

\smallskip
\underline{\juan{\textit{Step 2}}}. \juan{Solving $\tilde{z}$ from~(\ref{eq:linear_combi}), replacing it in~(\ref{eq:thetatilde}) and writing $N=\Lambda^{\top}C^{\top}QC\Lambda\succeq0$ as in (\ref{eq:insta_infomatrix}), yields}
\begin{equation}\label{eq:theta_cascade}
   \dot{\tilde{\theta}}_{\ell}
   = -\,\Gamma^{-1} N\,\tilde{\theta}_{\ell}
     \;-\;\Gamma^{-1}\Lambda^{\top}C^{\top}QC\,\eta.
\end{equation}
\juan{By \textit{Step 1} result}, the second term $\;\Gamma^{-1}\Lambda^{\top}C^{\top}QC\,\eta$ is a vanishing source. Now, consider the candidate Lyapunov functional
\begin{equation}\label{eq:Lyap_candidate}
   V_{\ell}(\tilde{\theta}_{\ell})=\tilde{\theta}_{\ell}^{\top}\Gamma\,\tilde{\theta}_{\ell}.
\end{equation}
Since $\Gamma(t)=\Gamma(t)^{\top}\succ0$ for every $t$, so is its inverse $\Gamma^{-1}(t)$, and the quadratic form~(\ref{eq:Lyap_candidate}) vanishes only at $\tilde{\theta}_{\ell}=0$. \juan{Moreover, $\Gamma(t)$ is the solution of~(\ref{eq:gamma_law}) driven by the continuous, uniformly bounded matrix $N(t)$, so that $\Gamma(t)$ is continuously differentiable on $[t_{k},t_{k+1})$ and so is $V_{\ell}$. The forgetting-factor $\rho_{\theta}$ keeps $\Gamma(t)$ uniformly bounded above, with $\lVert N(t)\rVert\leq\bar{n}$ (Assumption~\ref{assum:IE_condi}), integration of~(\ref{eq:gamma_law}) yields}
\begin{equation}
\Gamma(t)\preceq e^{-\rho_{\theta}(t-t_{k})}\Gamma(t_{k})+\tfrac{\bar{n}}{\rho_{\theta}}I\preceq\bar{p}\,I,
\end{equation}
\juan{where $\bar{p}$ provides the uniform upper bound,} while the accumulated excitation
in~(\ref{eq:IEcondi}) provides the uniform lower bound $\Gamma(t)\succeq \underaccent{\bar}{p}\,I\succ0$ on the active interval. Hence, $V_{\ell}$ is bounded by
\begin{equation}\label{eq:Lyap_bounds}
   \underaccent{\bar}{p}\,\lVert\tilde{\theta}_{\ell}\rVert^{2}
   \;\leq\; V_{\ell}(\tilde{\theta}_{\ell}) \;\leq\;
   \bar{p}\,\lVert\tilde{\theta}_{\ell}\rVert^{2},
   ~~~ 0<\underaccent{\bar}{p}\leq\bar{p}<\infty .
\end{equation}
\juan{Therefore, $V_{\ell}(t)$ is indeed a candidate Lyapunov functional, which is continuously differentiable on $[t_{k},t_{k+1})$}. Differentiating $V_{\ell}$ along the homogeneous part of~(\ref{eq:theta_cascade}), i.e., $\dot{\tilde{\theta}}_{\ell}=-\Gamma^{-1} N\tilde{\theta}_{\ell}$, yields
\begin{align}\label{eq:Vdot}
   \dot{V}_{\ell}
   &=2\,\tilde{\theta}_{\ell}^{\top}\Gamma\,\dot{\tilde{\theta}}_{\ell}
     +\tilde{\theta}_{\ell}^{\top}\dot{\Gamma}\,\tilde{\theta}_{\ell}\nonumber\\
   &=-2\,\tilde{\theta}_{\ell}^{\top}N\,\tilde{\theta}_{\ell}
     +\tilde{\theta}_{\ell}^{\top}\bigl(-\rho_{\theta}\Gamma+N\bigr)\tilde{\theta}_{\ell}\nonumber\\
   &=-\rho_{\theta}\,V_{\ell}-\tilde{\theta}_{\ell}^{\top}N\,\tilde{\theta}_{\ell}
   =-\rho_{\theta}\,V_{\ell}-\bigl\lVert Q^{1/2}C\Lambda\tilde{\theta}_{\ell}\bigr\rVert^{2}.
\end{align}
Since $N\succeq0$ and $\rho_{\theta}>0$, the derivative~(\ref{eq:Vdot}) is negative definite, $\dot{V}_{\ell}\leq-\rho_{\theta}V_{\ell}\leq0$, which proves~(i): $V_{\ell}$, and hence $\lVert\tilde{\theta}_{\ell}\rVert_{\Gamma}$, is non-increasing while mode~$\ell$ is active.
 
\smallskip
 
\underline{\juan{\textit{Step 3}}}. The differential inequality, $\dot{V}_{\ell}\leq-\rho_{\theta}V_{\ell}$, obtained in~(\ref{eq:Vdot}) \juan{integrates over the active interval $[t_{k},t_{k+1})$, as}
\begin{equation}\label{eq:V_balance}
   V_{\ell}(t_{k+1})\;\leq\;e^{-\rho_{\theta}(t_{k+1}-t_{k})}\,V_{\ell}(t_{k})
   \;=\;e^{-\rho_{\theta}\mathrm{AT}_{\ell}}\,V_{\ell}(t_{k}).
\end{equation}
Recalling $V_{\ell}=\lVert\tilde{\theta}_{\ell}\rVert_{\Gamma}^{2}$, yields the contraction in the parameter estimation error as,
\begin{equation} \label{eq:proof_theta_error_contraction}
   \bigl\lVert\tilde{\theta}_{\ell}(t_{k+1})\bigr\rVert_{\Gamma}
   \;\leq\;e^{-\rho_{\theta}\mathrm{AT}_{\ell}/2}\,
   \bigl\lVert\tilde{\theta}_{\ell}(t_{k})\bigr\rVert_{\Gamma}, 
\end{equation}
this is exactly~(\ref{eq:theta_error_contraction}) with 
\begin{equation} \label{eq:rho_ell}
\rho_{\ell}=e^{-\rho_{\theta}\mathrm{AT}_{\ell}/2}\in(0,1).
\end{equation}
\juan{For a given $\rho_{\theta}$ and an active time interval satisfying the dwell-time constraint ($\mathrm{AT}_{\ell}\geq T_{\textrm{min},\ell}$ as stated in Definition~(\ref{def:dwell_time})) the value of $\rho_{\ell}$ is maintained below one.} \juan{The role of the finite-time window excitation condition~(\ref{eq:IEcondi}) in Assumption~(\ref{assum:IE_condi}) is to
keeps $N(t)$ from collapsing under $\rho_{\theta}$ in~(\ref{eq:gamma_law}), thus securing the uniform lower bound $\Gamma(t)\succeq\underaccent{\bar}{p}\,I$ used in~(\ref{eq:Lyap_bounds})}. Finally, the full parameter error dynamics~(\ref{eq:theta_cascade}) is an uniformly exponentially stable homogeneous system driven by the exponential decaying signal $\eta$, hence is itself uniformly exponentially stable \cite{Loria2002}, which preserves $\rho_{\ell}<1$ in the presence of the vanishing forcing. This proves~(ii). Moreover, from~(\ref{eq:eta_vanish}) and~(\ref{eq:rho_ell}) is clear that a longer active interval attains a stronger contraction, as $\eta$ vanishes according to ~(\ref{eq:eta_vanish}) and $\rho_{\ell}$ becomes smaller as stated by ~(\ref{eq:rho_ell}), in agreement with~(ii) statement.
 
\smallskip

\underline{\juan{\textit{Step 4}}}. \juan{
Solving $\tilde{z}$ from~(\ref{eq:linear_combi}), results in $\tilde{z}=\eta+\Lambda\tilde{\theta}_{\ell}$. Considering $\Lambda$ bounded, i.e., $\bar{\Lambda}\triangleq\sup_{t}\lVert\Lambda(t)\rVert<\infty$ (Assumption~\ref{assum:detect_condi}), along with ~(\ref{eq:eta_vanish}) produces
\begin{equation}\label{eq:z_bound}
   \lVert\tilde{z}(t)\rVert\leq\lVert\eta(t)\rVert+\bar{\Lambda}\,\lVert\tilde{\theta}_{\ell}(t)\rVert,
   \qquad \forall t\in[t_{k},t_{k+1}).
\end{equation}
Taking into account the results of \textit{Step 1} ($\eta$ vanishing according to~(\ref{eq:eta_vanish})), and \textit{Step 3} ($\tilde{\theta}_{\ell}$ is contractive according to~(\ref{eq:proof_theta_error_contraction})), thus, $\lVert\tilde{z}(t)\rVert\leq \epsilon_{z}$ for $t\in[t_{k},t_{k+1})$, where
\begin{equation} \label{eq:epsilon}
\epsilon_{z}=c_{0}e^{-\lambda \mathrm{AT}_{\ell}}\lVert\eta(t_{k})\rVert+\bar{\Lambda}\, e^{-\rho_{\theta}\mathrm{AT}_{\ell}/2}\,
   \bigl\lVert\tilde{\theta}_{\ell}(t_{k})\bigr\rVert_{\Gamma}.
\end{equation}
Since $\lambda,\rho_{\theta}>0$, $\mathrm{AT}_{\ell}\geq T_{\textrm{min},\ell}$; as well as $\eta$, $\Lambda$, and $\tilde{\theta}_{\ell}$ are bounded, therefore $\tilde{z}$ is bounded. This proves~(iii).
} \qed 
\end{pf} \\[3pt]

\juan{
The imposition of the dwell time constraint is required to fulfill with~(\ref{eq:IEcondi}) in Assumption~(\ref{assum:IE_condi}). Accordingly, the dwell time term could be expressed as,
\begin{equation}\label{eq:dwelltime_definition}
   T_{\textrm{min},\ell}
   \;=\;\inf\Bigl\{\,T>0 \;\bigm|\;
   \lambda_{\textrm{min}}\!\bigl(\mathcal{N}_{\ell}(t_{k},t_{k}+T)\bigr)\geq\delta_{\ell}\Bigr\},
\end{equation}
and any $\mathrm{AT}_{\ell}\geq T_{\textrm{min},\ell}$ ensure the strict contraction property stated in Theorem~(\ref{theo:modewise}).(ii). 
}

\juan{
\begin{rem}[Setting the dwell time]\label{rem:set_Tmin}
Finding the solution of Equation~(\ref{eq:dwelltime_definition}) is computationally feasible: simulating the filter $\Lambda$ and accumulating $\mathcal{N}_{\ell}$, the dwell time term $T_{\textrm{min},\ell}$ is the first time $\lambda_{\textrm{min}}(\mathcal{N}_{\ell})$ crosses $\delta_{\ell}$, thus validating the Inequality~(\ref{eq:IEcondi}) in Assumption~(\ref{assum:IE_condi}). Accordingly, we check numerically Equation~(\ref{eq:dwelltime_definition}) for the two numerical examples presented in the next section.
\end{rem}
}

\section{Numerical Examples} \label{s:numerical_examples}

A mode-wise adaptive observer as presented in~(\ref{eq:adaptiveObs}) could serve not only for state estimation but also for adaptive control formulations that improve current predictive control approaches for switched nonlinear systems in a wide range of biomedical control applications \cite{sereno25}. \juan{Here, we exemplify the mode-wise adaptive observer~(\ref{eq:adaptiveObs}) on two autonomous switched nonlinear systems: The Hindmarsh-Rose model \cite{hindmarshrose84}, which stands for a fast dynamic process, and an HIV within-host model \cite{hernandez21a}, which stand for a slow dynamics process.
}

\subsection{\juan{Hindmarsh-Rose model}}

\juan{Consider that the spiking-bursting behavior of the membrane potential of a single neuron could be described by the following set of nonlinear ordinary differential equations \cite{hindmarshrose84},
\begin{align}\nonumber
\dot{x}_{1} &= x_{2} - a_{\sigma}\,x_{1}^{3} + b_{\sigma}\,x_{1}^{2} - x_{3} + I_{\mathrm{a}},\\ \nonumber
\dot{x}_{2} &= c - d_{\sigma}\,x_{1}^{2} - x_{2},\\
\dot{x}_{3} &= r\big(s(x_{1}-x_{\mathrm{R}}) - x_{3}\big). 
\label{eq:hr}
\end{align}
Where $x_{1}$ stands for the membrane potential of the neuron, $x_{2}$ stands for the fast recovery (or spiking) variable, which lumps the transport of fast ions across the membrane, and $x_{3}$ stands for the slow adaptation (or bursting) current, which accounts for the slow ionic exchange responsible for the alternation between quiescent and firing phases. As the HR model is dimensionless, here the interpretation of the temporal evolution is made in time units, setting this format for the interpretation of our active interval~(\ref{def:Active_time}) and the dwell-time concepts~(\ref{def:dwell_time}). The HR model can be represented by the class of autonomous switched nonlinear system described by~(\ref{eq:asnl}), where the dynamics are driven solely by the switching signal $\sigma(t)=\ell$, in this case $\ell\in\mathcal{M}=\{1,~2,~3\}$, where each mode $\ell$ stands for a distinct firing regime of the neuron.}

\juan{
HR model parameters are described as follows. The term $I_{\mathrm{a}}$ represents the applied current that sustains oscillation, while $c$ denotes the resting level of the fast recovery variable. The parameter $r$ sets the time-scale separation between the slow adaptation current and the fast subsystem and $s$ stands for the sensitivity of the adaptation current to deviations of the membrane potential from its resting value $x_{\mathrm{R}}$. The coefficients $a_{\sigma(t)}$ and $b_{\sigma(t)}$ shape the cubic and quadratic nonlinearity of the fast current-voltage characteristic, thus governing the amplitude and the width of the spikes under the firing regime selected by $\sigma(t)$. Similarly, $d_{\sigma(t)}$ denotes the strength with which the membrane potential recruits the fast recovery variable under the firing regime selected by $\sigma(t)$. Accordingly, the switched unknown parameter vector of mode~$\ell$ is $\theta_{\ell}=(a_{\ell},~b_{\ell},~d_{\ell})^{\top}\in\mathbb{R}^{n_{p}}$, with $n_{p}=3$, whereas the non-switched parameters are fixed as $c=1$, $s=4$, $x_{\mathrm{R}}$, $r=6\times10^{-3}$, and $I=3.2$.}

\smallskip
\juan{Considering the HR state vector as $z(t) = (x_{1},x_{2},x_{3})^{\top}$ and the measured output $y=x_{1}$, HR model~(\ref{eq:hr}) could be written in the state-affine form $\Sigma_{\ell}$ of~(\ref{eq:stateaffine}), where 
\begin{equation*}
A=\begin{bmatrix}0&1&-1\\0&-1&0\\ r s&0&-r\end{bmatrix},\;
\beta(y)=\begin{bmatrix}I_{\mathrm{a}}\\ c\\ -r s\,y\end{bmatrix},\;
\end{equation*}
\begin{equation*}
\Psi(y)=\begin{bmatrix}-y^{3}&y^{2}&0\\0&0&-y^{2}\\0&0&0\end{bmatrix},\;
C=\begin{bmatrix}1&0&0\end{bmatrix},
\end{equation*}
Note that in this case, $A$ entries do not depend on the output, but all his values are known, and the whole regime dependence of~(\ref{eq:hr}) is concentrated in the term $\Psi(y)\theta_{\ell}$. Since, $(A,\beta,C)$ are common to the three firing regimes and only $\theta_{\ell}$ changes with $\sigma(t)$, the three realizations $\Sigma_{1},\Sigma_{2},\Sigma_{3}$ could be represented by the switched state-affine representation $\Sigma_{\sigma}$ of~(\ref{eq:swtiched_state_affine}) by simply replacing $\theta_{\ell}$ with $\theta_{\sigma(t)}$. Regarding the assumptions of Theorem~\ref{theo:modewise}, the pair $(A,C)$ is observable ($\det\mathcal{O}=r-1\neq0$), so Lemma~(\ref{Lemm:obs_f_ell}) holds and the solution $S(t)$ of the Riccati-like filter in~(\ref{eq:adaptiveObs}) stays bounded and uniformly positive definite, which fulfills Assumption~(\ref{assum:detect_condi}) with $K(t)=S^{-1}(t)C^{\top}Q$; in turn, the regressors $y^{3}$ and $y^{2}$ are functionally independent under the bursting excitation, so Lemma~(\ref{Lemm:ident_switched}) holds and $\theta_{\ell}$ is locally identifiable from the measurable output.}

\smallskip
\juan{For the numerical simulation, a periodic-cycle switching signal, with an active time interval $\mathrm{AT}_{\ell}=400$~units, is considered. The unknown switched parameters are set as $\theta_{1}=(1,3,5)$, $\theta_{2}=(0.95,2.6,4.7)$, and $\theta_{3}=(0.88,3.3,4)$. The family of mode-wise adaptive observers is defined as $\Pi=\{\Pi_{1},~\Pi_{2},~\Pi_{3}\}$, where $\Pi_{1}$, $\Pi_{2}$, and $\Pi_{3}$ account for the firing regimes $1$, $2$, and $3$, respectively. 
Each $\Pi_{\ell}$ is a realization of~(\ref{eq:adaptiveObs}), and is integrated only while $\sigma(t)=\ell$, from its initial activation $t_{k}$ as $S(t_{k})= 5 \times I_{3}$, $\Lambda(t_{k})= 0$, and $\Gamma(t_{k})=I_{3}$, while the two inactive observers hold their internal state frozen until their regime is revisited. 
The design constants in~(\ref{eq:adaptiveObs}) are set as $Q=5$, $\rho_{z}=8$, and $\rho_{\theta}=0.03$ for all $\Pi_{\ell}$, with $\ell\in\{1,~2,~3\}$, and the estimates are initialized $30\%$ away from their true values.
}

\begin{figure}
\centering
\includegraphics[width=1.0\columnwidth]{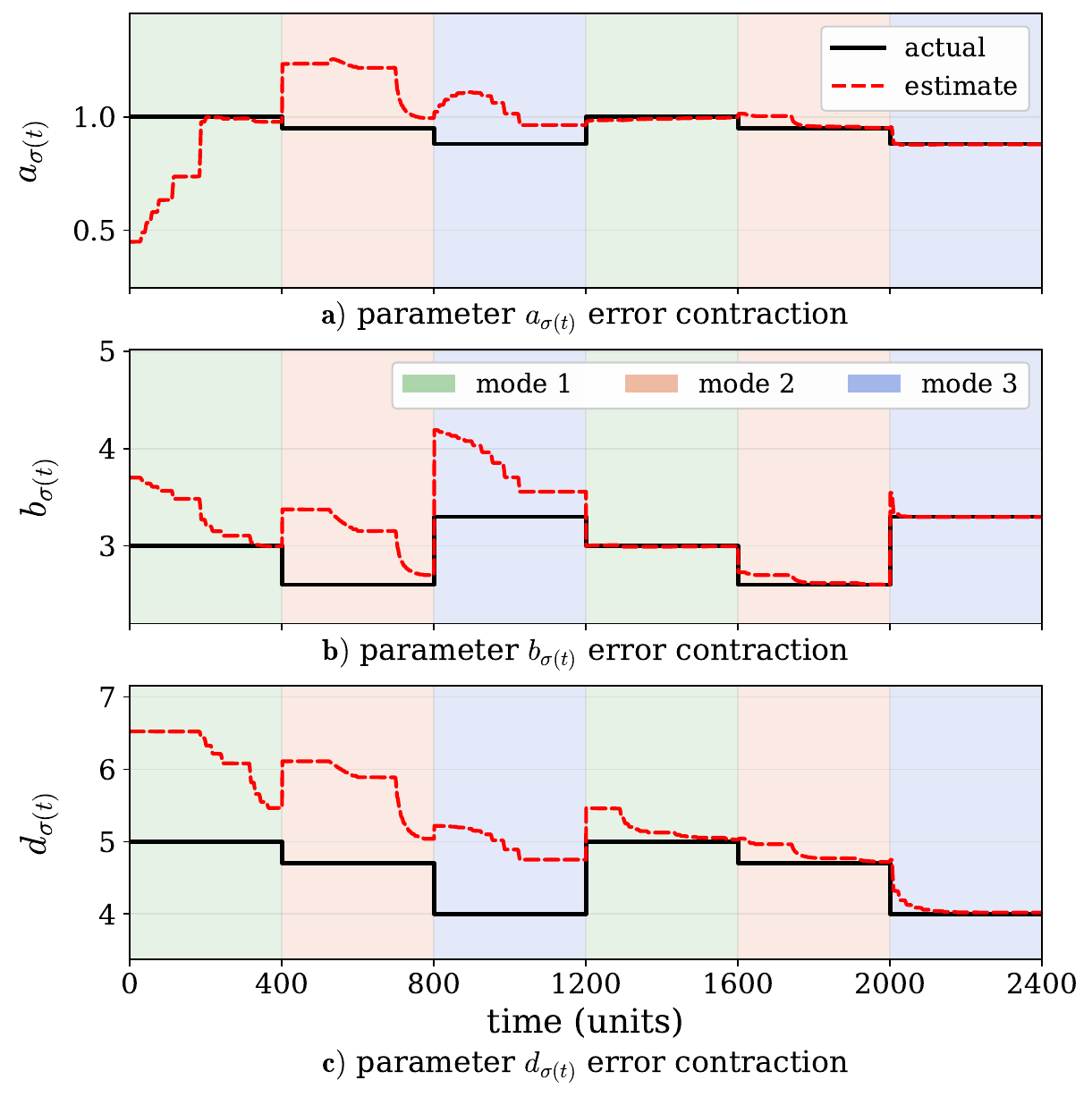}
\caption{\juan{HR model: Estimates for each switched parameter. From top to bottom, a) parameter $a_{\sigma(t)}$, b) parameter $b_{\sigma(t)}$, and c) parameter $d_{\sigma(t)}$. Black solid line: actual value; red dashed line: active-mode estimate. Shaded bands indicate the actual active mode, $1$ (light green), $2$ (light orange), $3$ (light blue).}}
\label{fig:hr_conv}
\end{figure}

\juan{For the HR model, Figure~\ref{fig:hr_conv} shows the estimation of the switched parameter vector $\theta_{\sigma(t)}=(a_{\sigma(t)},b_{\sigma(t)},d_{\sigma(t)})^{\top}$ produced by the parameter update law presented in our proposal of mode-wise adaptive observer~(\ref{eq:adaptiveObs}). Starting from different initial conditions in both the states and parameters, each estimate is refined only while its own mode is active, whereas the two inactive observers freeze their last estimate until their mode is revisited. Since every mode is re-activated along the time cycle, its second activation starts from the value retained at the previous visit and therefore improves further its estimates, illustrating how the mode-wise architecture accumulates information across successive visits of the same mode. Our proposal achieve a final per-mode relative error that remain below $7\times10^{-2}$ for all the switched modes.}

\juan{For the HR model, Figure~\ref{fig:hr_states} shows the state estimation, the per-mode normalized parameter estimation error $\lVert\tilde{\theta}_{\ell}\rVert/\lVert\theta_{\ell}\rVert$, and the switching signal realization $\sigma(t)$ over the simulation period, from $t=0$ to $t=2400$ units. As observed in Figure~\ref{fig:hr_states}, panels (a), (b), and (c), our proposal of mode-wise adaptive observer~(\ref{eq:adaptiveObs}) is capable of reconstructing the bursting trajectories of the unmeasured fast recovery variable $y$ and the slow adaptation current $z$ from the single output $x$, in agreement with the bounded state estimation error presented in Theorem~\ref{theo:modewise}.(iii). Moreover, in Figure~\ref{fig:hr_states}.(d) one can observer how the per-mode parameter estimation error decreases during active windows, while is held constant when the mode is inactive.}

\begin{figure}[h]
\centering
\includegraphics[width=1.0\columnwidth]{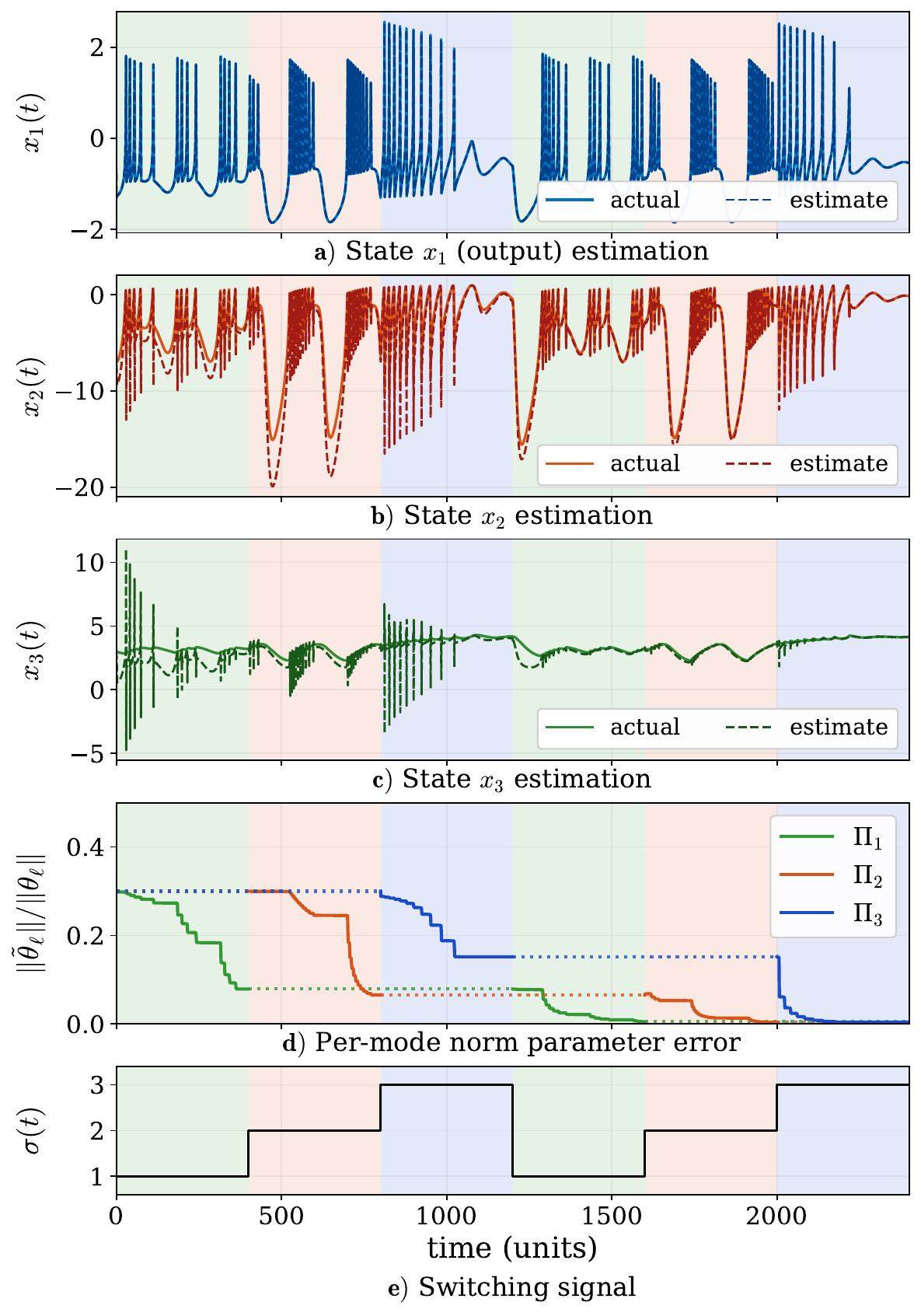}
\caption{\juan{HR model: simulation summary. From top to bottom, a) the membrane potential $x$, b) the fast recovery variable $y$, c) the slow adaptation current $z$ (actual in solid-line vs.\ estimate in dash-line), d) the per-mode normalized parameter error (modes: $1$ (green), $2$ (orange), and $3$ (blue), solid-line while the mode is active, dotted-line while frozen), and e) the switching signal $\sigma(t)$.}}
\label{fig:hr_states}
\end{figure}

\begin{figure*}[ht]
\centering
\includegraphics[width=1.0\textwidth]{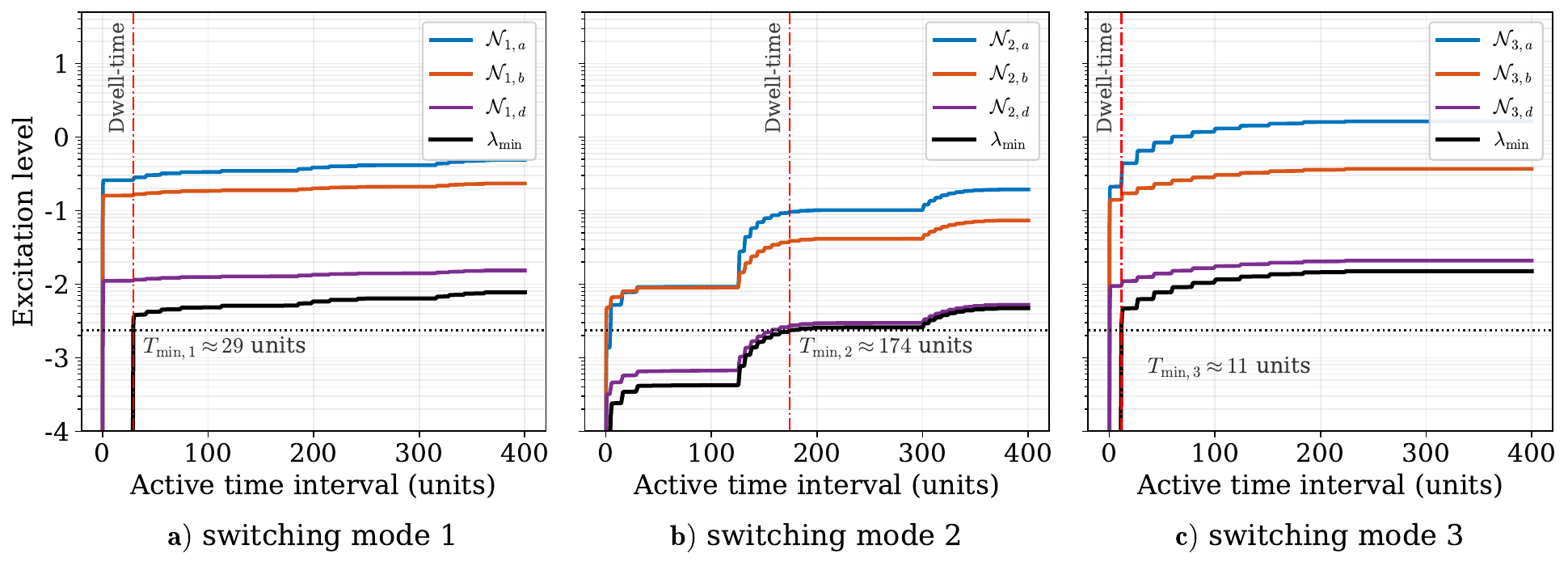}
\caption{\juan{HR model: Excitation condition~(\ref{eq:IEcondi}) and dwell-time constraint~(\ref{eq:dwelltime_definition}) validation. From left to right, a) switching mode $1$ (firing regime $1$), b) switching mode $2$ (firing regime $2$), and c) switching mode $3$ (firing regime $3$).} Panels (a)-(c) are in log scale ($0$ means $10^{0}$). For each mode, the accumulated per-element excitation $\mathcal{N}_{\ell,jj}$ ($j\in\{a,b,d\}$) and the smallest eigenvalue $\lambda_{\min}$ of the accumulated information are shown; the threshold $\delta_{\ell}$ (horizontal dotted black line) is reached at the minimal dwell time $T_{\textrm{min},\ell}$ (vertical dash-dotted red line).}
\label{fig:hr_pe}
\end{figure*}

\juan{For the HR model, Figure~\ref{fig:hr_pe} summarized the numerical analysis of the finite-time window excitation condition presented in the Inequality~(\ref{eq:IEcondi}), and as stated in Remark~(\ref{rem:set_Tmin}), for the computation of the minimal dwell time term $T_{\textrm{min},\ell}$. The accumulated information $\mathcal{N}_{\ell}(t_{k},t)$ of~(\ref{eq:Accum_infomatrix}) is built from the filter $\Lambda$ of~(\ref{eq:adaptiveObs}) with $\Lambda(t_{k})=0$. 
The neuronal bursts drive the membrane potential through a wide amplitude range, so the regressor $\Psi(y)$ is persistently exciting, and all three parameters are identifiable in every mode. 
Accordingly, the smallest eigenvalue of the accumulated information matrix crosses the threshold $\delta_{\ell}$ of condition~(\ref{eq:IEcondi}) at minimal dwell times $T_{\textrm{min},\ell}\approx29,\,174,\,11$~units, for modes $1$, $2$, and $3$, respectively, all computed according to Remark~(\ref{rem:set_Tmin}). The larger value for mode~$2$ reflects a slower accumulation of information in that firing regime. In all three cases $T_{\textrm{min},\ell}$ stays well below the active time interval, which is set $\mathrm{AT}_{\ell}=400$~units for all the switching modes, so Assumption~(\ref{assum:IE_condi}) and the dwell-time constraint $\mathrm{AT}_{\ell}\ge T_{\textrm{min},\ell}$ of Definition~(\ref{def:dwell_time}) are satisfied.}

\subsection{HIV \juan{within-host} model}

Consider that the HIV genotypes and immune response dynamics can be represented with a set of ordinary differential equations of the following form \cite{hernandez13s},
\begin{align} \nonumber
\dot{T} &= s_T + \frac{\rho_T}{C_T + V_T} T V_T - \sum_{i=1}^{n} k_{T,\sigma}^i T V_i - \delta_T T  \\ \nonumber
\dot{M} &= s_M + \frac{\rho_M}{C_M + V_T} M V_T - \sum_{i=1}^{n} k_{M,\sigma}^i M V_i - \delta_M M \\ \nonumber
\dot{T}_i^* &= k_{T,\sigma}^i T V_i + \sum_{j=1}^{n} \mu m_{i,j} V_j T - \delta_{T^*} T_i^* \\ \nonumber
\dot{M}_i^* &= k_{M,\sigma}^i M V_i + \sum_{j=1}^{n} \mu m_{i,j} V_j M - \delta_{M^*} M_i^* \\
\dot{V}_i &= p_{T,\sigma}^i T_i^* + p_{M,\sigma}^i M_i^* - \delta_V V_i. \label{eq:hiv_model}
\end{align}
Where $T$ and $T_{i}^{*}$, stand for the population of healthy and infected CD4$+~$T cells, respectively. Similarly, $M$ and $M_{i}^{*}$, stand for the population of healthy and infected macrophages, respectively. Each virus strain is represented by $V_{i}$. As a simple motivating example of HIV mutation, we consider the aforementioned model with $4$ genetic variants, i.e., $i \in \{1,~2,~ 3,~ 4\}$, and under pressure of two possible drug therapies; hence, the total viral load is given by $V_T = \sum_{i=1}^{4} V_i$, and $\sigma(t) = \ell$, with $\ell \in \mathcal{M} = \{0, ~1,~2\}$, where, for this particular example, we set $\ell = 0$ as the no-therapy or free virus evolving scenario. Similar to the HR model, the HIV mutation model~(\ref{eq:hiv_model}) falls under to the autonomous switched nonlinear system described by~(\ref{eq:asnl}), where each switching mode $\ell$ stands for a distinct drug therapy regime for the patient.

HIV model~(\ref{eq:hiv_model}) dynamics are characterized by a set of non-therapy-dependent and therapy-dependent parameters. Hence, a subset of model parameters does not depend on the switching signal realization and is described as follows. For the healthy CD4$+~$T cells and macrophages, the source term productions are represented with $s_T$ and $s_M$; the clearance rates are $\delta_T$ and $\delta_M$. While $\delta_{T^*}$ and $\delta_{M^*}$ denote the death rates of their infected counterparts, and $\delta_V$ represents the clearance rate of free virus particles. The terms $\rho_T$ and $\rho_M$, represent the maximum proliferation rates of CD4$+~$T cells and macrophages induced by viral load, along with their respective half-saturation constants $C_T$ and $C_M$. While the subset switched parameters, they are described as follows. The infection rates are represented by $k_{T,\sigma}^i$ and $k_{M,\sigma}^i$ at which CD4$+~$T cells and macrophages, respectively, become infected by the genotype $i$ under therapy selected by $\sigma(t)$. Similarly, $p_{T,\sigma}^i$ and $p_{M,\sigma}^i$ denote the viral production rates from infected CD4$+~$T cells and infected macrophages of genotype $i$ under therapy selected by $\sigma(t)$. Finally, the mutation dynamics are driven through the infected populations, in which the term $m_{i,j} = 1$ represents the mutation that occurs from genotype $j$ to $i$, $m_{i,j} = 0$ otherwise. We consider the mutation tree as stated \juan{in \cite{hernandez12}}. The wild-type variant ($V_1$) could mutate to $V_2$ and $V_3$, which implies $m_{2,1} = 1$, and $m_{3,1} = 1$, while variants $V_2$ and $V_3$, could mutate to the highly resistant genotype $V_4$, which means $m_{4,2} = 1$, and $m_{4,3} = 1$, for all other cases $m_{i,j} = 0$.

\juan{The full state vector $x\in\mathbb{R}^{14}$, could be considered as $x=(\{T_{i}^{*},M_{i}^{*},V_{i}\}_{i \in \mathbb{N}_{1:4}},T,M)^{\top}$ and the measured outputs as the viral loads of each variant $V_i$ and the healthy CD4$+~$T cells, i.e., $y=(V_{1},V_{2},V_{3},V_{4},T)^{\top}$, then $C$ is the selection matrix such that $y=Cx$. The switched unknown parameter vector is taken as the vector of infection rates of the CD4$+~$T cell compartment, $\theta_{\ell}=(k_{T,\ell}^{1},~k_{T,\ell}^{2},~k_{T,\ell}^{3},~k_{T,\ell}^{4})^{\top}\in\mathbb{R}^{n_{p}}$, with $n_{p}=4$, while the remaining therapy-dependent coefficients are assumed known for each prescribed therapy. Under these settings, the HIV model~(\ref{eq:hiv_model}) admits the state-affine form $\Sigma_{\ell}$ of~(\ref{eq:stateaffine}), in which we consider the change of coordinates $z \triangleq x / \lambda$, where $\lambda$ is a known constant scaling factor chosen for convenience of numerical computation.}

\juan{Consequently, $\dot{z}=A(y)z+\beta(y)+\Psi(y)\theta_{\ell}$, on every active interval $[t_{k},t_{k+1})$ with $\sigma(t)=\ell$, with $\ell \in \mathcal{M} = \{0, ~1,~2\}$. Products $T \cdot V_{i}$ that carry the unknown infection rates are characterized by measured variables only, so they are accumulated in the output-only regressor $\Psi(y)$, whose $i$-th column is the regressor of $k_{T,\ell}^{i}$ and enters the $\dot{T}$ and $\dot{T}^{*}_{i}$ rows; the source terms and the saturating proliferation terms are collected in $\beta(y)$; and the rest of the linear and bilinear terms (clearance, mutation, macrophage infection, and viral production) are incorporated through the output-dependent matrix $A(y)$. Since the therapy switching signal $\sigma(t)$ only changes the unknown vector $\theta_{\ell}$, the three realizations $\Sigma_{0},\Sigma_{1},\Sigma_{2}$ could be represented by the switched state-affine representation $\Sigma_{\sigma}$ of~(\ref{eq:swtiched_state_affine}). Regarding the assumptions of Theorem~\ref{theo:modewise}, observability and identifiability analysis is derived by generating the generalized observability-identifiability matrix from the output map and its Lie derivatives~(\ref{eq:Obsmatrix}), as stated in~\cite{villaverde16,villaverde19a}. We make use of the FISPO algorithm of the STRIKE-GOLDD toolbox~\cite{diazseoane23}. For every therapy realization $\ell\in\mathcal{M}$, the subsystem $f_{\ell}$ has $n_{x}=14$ states, no inputs, $n_{y}=5$ measurable outputs, and $n_{p}=4$ unknown switched parameters. Thus, by computing three Lie derivatives to build the generalized observability-identifiability matrix, its rank equals $18=n_{x}+n_{p}$, i.e., it is full rank. Hence, Lemma~(\ref{Lemm:obs_f_ell}) and Lemma~(\ref{Lemm:ident_switched}) holds; consequently, the solution $S(t)$ of the Riccati-like filter in~(\ref{eq:adaptiveObs}) stays bounded and uniformly positive definite, which fulfills Assumption~ (\ref{assum:detect_condi}) with $K(t)=S^{-1}(t)C^{\top}Q$, for all the three realizations of the switched state-affine representation~(\ref{eq:swtiched_state_affine}).
}

\begin{figure}[ht]
\centering
\includegraphics[width=1.0\columnwidth]{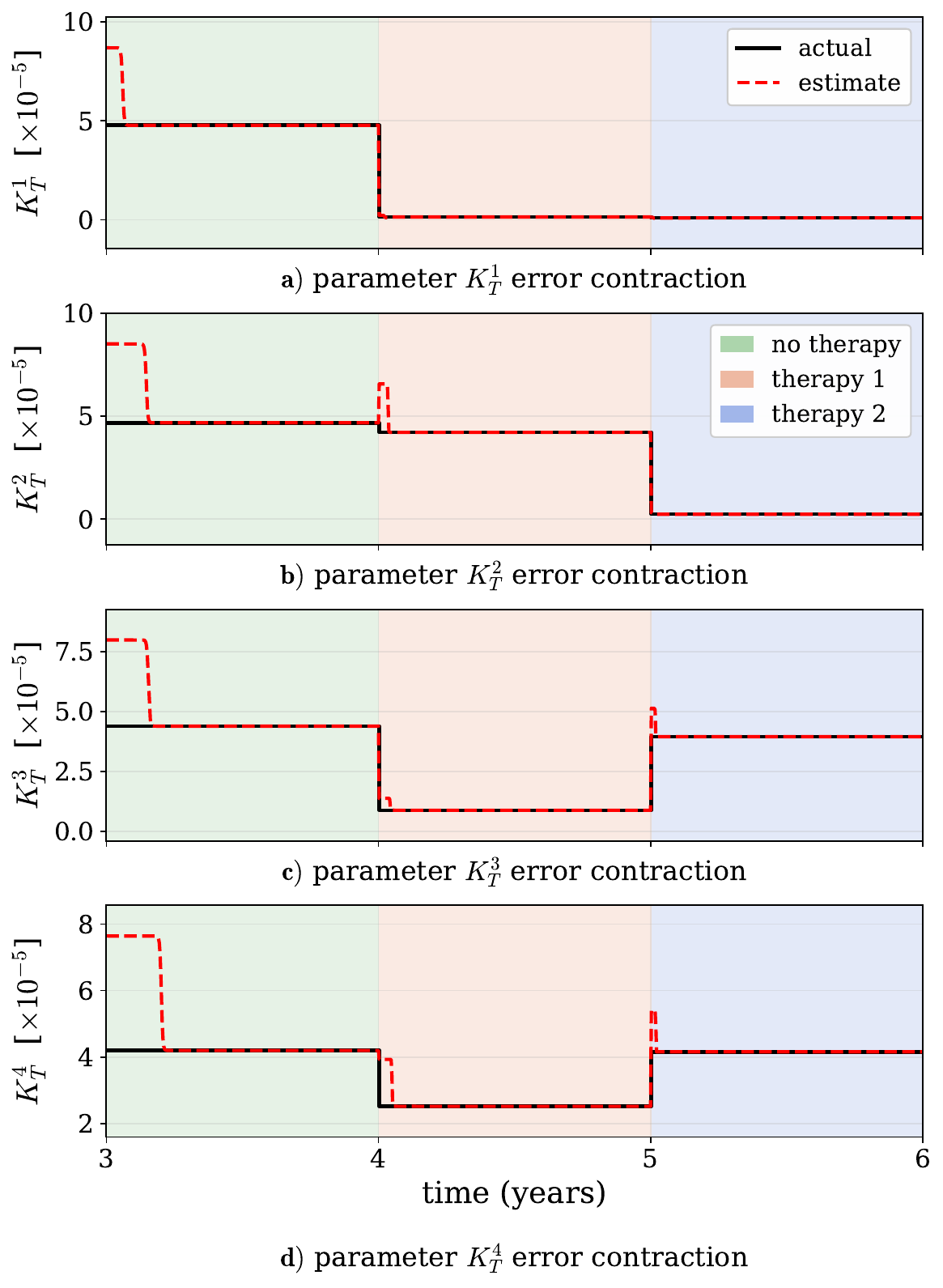}
\caption{\juan{HIV model: Estimates for each switched parameter. From top to bottom, the switched infection-rate parameters: a) $K_{T}^{1}$, b) $K_{T}^{2}$, c) $K_{T}^{3}$, d) $K_{T}^{4}$. Black solid line: actual value, red dashed line: active-mode estimate. The shaded bands indicate the active drug therapy (no therapy: light green; drug therapy~1: light orange; drug therapy~2: light blue).}}
\label{fig:KTconv}
\end{figure}

\begin{figure}[ht]
\centering
\includegraphics[width=1.0\columnwidth]{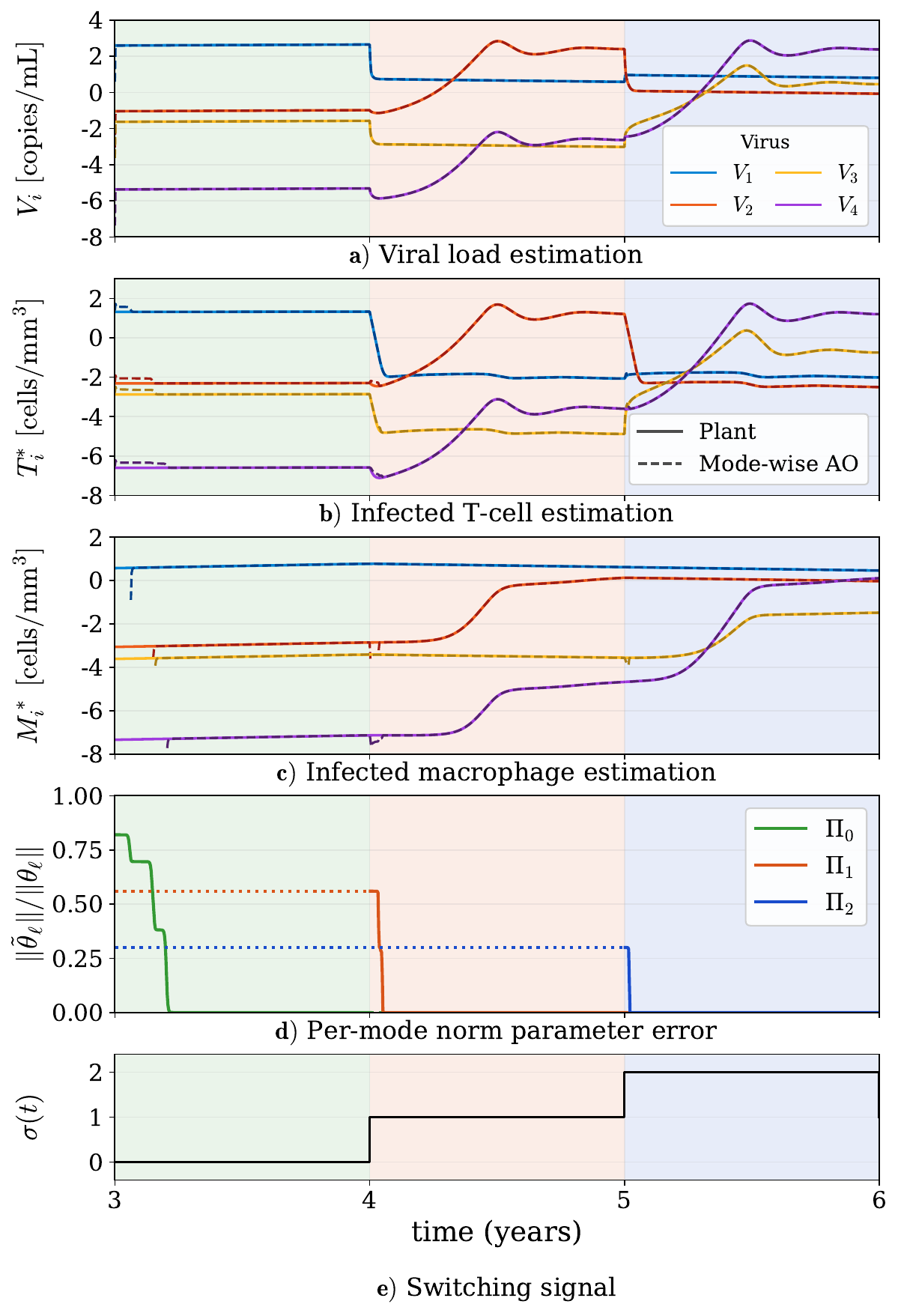}
\caption{\juan{HIV model: simulation summary. From top to bottom, a) viral loads $V_{i}$, b) infected CD4$^{+}$ T cells $T_{i}^{*}$, and c) infected macrophages $M_{i}^{*}$. Panels (a)-(c) are in log scale ($0$ means $10^{0}$) and solid lines: actual trajectories, dashed lines: active-observer estimates. panel d) the per-mode normalized parameter error $\lVert\tilde{\theta}_{\ell}\rVert/\lVert\theta_{\ell}\rVert$ (solid line while active, dotted line while frozen; $\Pi_{0}$ green, $\Pi_{1}$ orange, $\Pi_{2}$ blue); and in panel e) the therapy switching signal $\sigma(t)$.}}
\label{fig:states}
\end{figure}

\begin{figure*}[ht]
\centering
\includegraphics[width=1.0\textwidth]{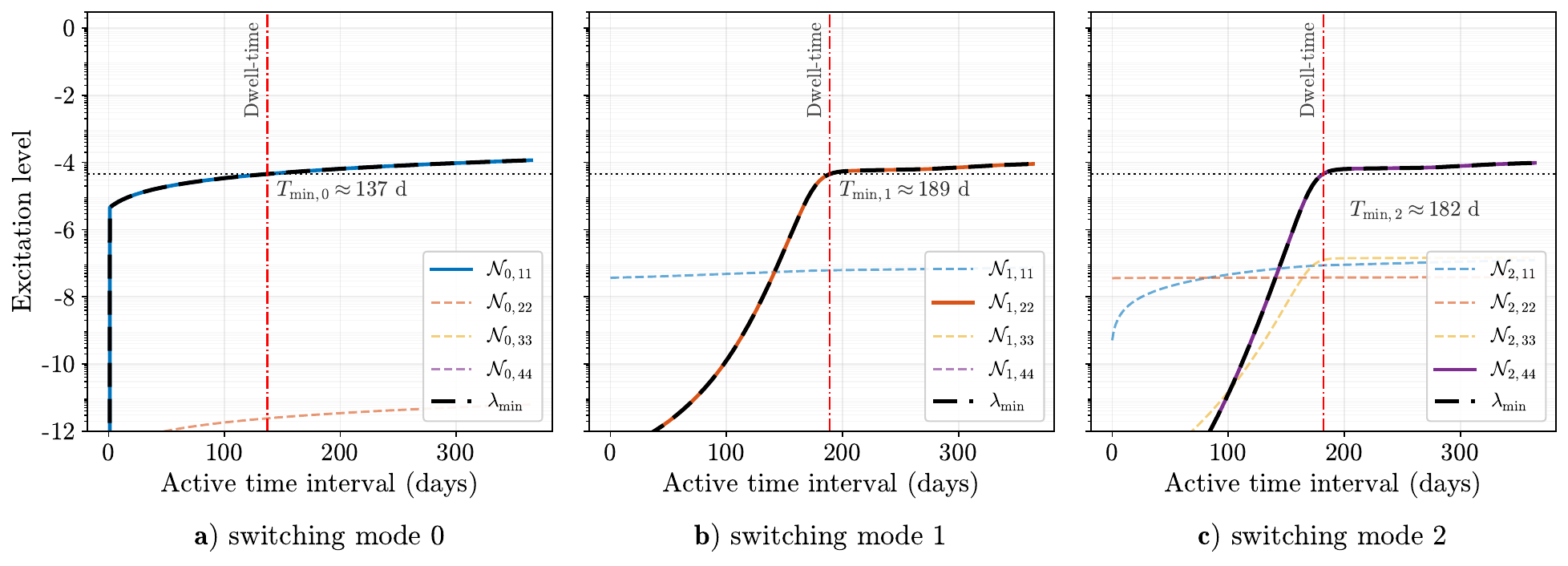}
\caption{\juan{HIV model: Excitation condition~(\ref{eq:IEcondi}) and dwell-time constraint~(\ref{eq:dwelltime_definition}) validation. From left to right, a) switching mode $0$ (no-therapy), b) switching mode $1$ (therapy 1), and c) switching mode $2$ (therapy 2).} Panels (a)-(c) are in log scale ($0$ means $10^{0}$). For each mode~$\ell$, the accumulated per-element excitation $\mathcal{N}_{\ell,jj}(t_{k},t)$ of the infection rates $k_{T,\ell}^{j}$ (solid: excited; dashed: weakly excited) is shown together with the smallest eigenvalue $\lambda_{\min}$ of the accumulated information matrix on the excited subspace (black). The excitation threshold $\delta_{\ell}$ (horizontal dotted black line) is reached at the minimal active time $T_{\textrm{min},\ell}$ (vertical dash-dotted red line).}
\label{fig:hiv_pe}
\end{figure*}

For the numerical simulation, we consider the initial simulation time as $t_{0} = 0$, and during the first $4$ years, the virus and immune response dynamics evolve under no drug therapy conditions. At $t=4$ years, a SWATCH treatment is considered; the logic behind this strategy is the alternation of drug therapies, in such a case, we consider an alternation time window of $1$ year for each drug. Thus, from $0$ to $4$ years, no drug therapy is applied, then $\ell=0$. At $t=4$ years, drug therapy 1 starts, $\ell=1$, until $t=5$ years, when treatment switches to drug therapy 2, $\ell=2$, and then the SWATCH treatment continues until the simulation ends. Accordingly, the active time of every therapy mode is $\mathrm{AT}_{\ell}=365$ days. The family of mode-wise adaptive observers is defined as $\Pi = \{ \Pi_0,~\Pi_1,~\Pi_2 \}$, where $\Pi_0$, $\Pi_1$, and $\Pi_2$, account for no drug therapy, drug therapy 1, and drug therapy 2, respectively. \juan{Each $\Pi_{\ell}$ is the realization of~(\ref{eq:adaptiveObs}) driven by the pair $\bigl(A(y),C\bigr)$ and the regressor $\Psi(y)$ described above, integrated only while its therapy is active, from $S(t_{k})= 1 \times I_{14}$, $\Lambda(t_{k})=1 \times \mathbf{1}_{14 \times4}$, and $\Gamma(t_{k})= 1 \times 10^{5} \cdot I_{4}$, while the two inactive observers hold their internal state frozen until their therapy is prescribed again. The forgetting factor of the state filter is set as $\rho_{z}=10$, and the forgetting factor of the information matrix~(\ref{eq:gamma_law}) is chosen as $\rho_{\theta}=1,~4,~8$ for $\Pi_0$, $\Pi_1$, and $\Pi_2$, respectively, whereas $Q = 5 \times I_{5}$, and the state estimates are initialized $30\%$ away from their true values, while the switched parameters estimates are initialized $80\%$, $60\%$, and $30\%$ away from their true values for $\ell=0$, $\ell=1$, and $\ell=2$, respectively. Finally, the scaling factor is set as $\lambda=1 \times 10^{8}$.}

\juan{For the HIV model, Figure~\ref{fig:KTconv} shows the estimation of the switched parameter vector $\theta_{\sigma(t)}=(k_{T,\sigma(t)}^{1},~k_{T,\sigma(t)}^{2},~k_{T,\sigma(t)}^{3},~k_{T,\sigma(t)}^{4})^{\top}$, with each of the switched signal realization representing a particular drug therapy scenario. Similarly to the HR model, the adaptive observer~(\ref{eq:adaptiveObs}) its started from different initial conditions, and each switched parameter estimate is refine only while its own drug therapy mode is active. For the HIV model case, one can observe that just one visit to each drug therapy achieve an strong parameter estimation error contraction, i.e., in all cases the relative estimation error falls below $10^{-3}$ within the corresponding active interval for each drug therapy mode.
}

\juan{For the HIV model, Figure~\ref{fig:states} summarized the numerical simulation, where the state estimation, the per-mode normalized parameter estimation error, and the switching signal are shown. As observed in Figure~\ref{fig:states}, panels (a), (b), and (c), the mode-wise adaptive observer~(\ref{eq:adaptiveObs}) is capable of reconstructing the unmeasured infected CD$4+~$ T cells, and infected macrophages trajectories from the measurable viral loads, and healthy CD$4+~$ T cell measurements. Moreover, in Figure~\ref{fig:states}, panel (d), one can observe how the normalized parameter error of each drug therapy mode decreases while it is active and is held constant while frozen. This behavior aligns with the Lyapunov dissipation of Theorem~\ref{theo:modewise}.(i) and with the freeze-and-resume mechanism of the mode-wise architecture, which in turns confirms that the mode-wise architecture removes the zero-input disturbance that the switching behavior would otherwise inject into the parameter estimation dynamics. Finally, in Figure~\ref{fig:states}, panel (e), one can observe the full drug therapy scheduling from the switching signal realization over the simulation period. Qualitatively, the numerical simulations shows a root-mean-squared logarithmic errors of the active-mode estimates over the whole horizon that remain below $3\times10^{-7}$ for the four infection rates $K_{T}^{i}$ and below $1\times10^{-2}$ for the viral loads and the infected cell populations, showing an accurate joint state-parameter estimation.}

\juan{For the HIV model, Figure~\ref{fig:hiv_pe} summarized the numerical analysis of the excitation condition~(\ref{eq:IEcondi}) and the dwell-time constraint~(\ref{eq:dwelltime_definition}), according to Remark~(\ref{rem:set_Tmin}). The accumulated information $\mathcal{N}_{\ell}(t_{k},t)$ of~(\ref{eq:Accum_infomatrix}) is built from the filter $\Lambda$ of the adaptive observer formulation~(\ref{eq:adaptiveObs}) with $\Lambda(t_{k})=0$. Since the $j$-th column of $\Lambda$ is the regressor of the infection rate $k_{T,\ell}^{j}$, whose energy is proportional to the viral load $V_{j}$ of genotype $j$, and each therapy renders a different genotype dominant, the excitation is naturally assessed in an element-wise manner. For every mode $\ell$ we report the per-element accumulated excitation $\mathcal{N}_{\ell,jj}(t_{k},t)$, identify the excited subset $\mathcal{S}_{\ell}=\{\,j:\mathcal{N}_{\ell,jj}\ \text{non-negligible}\,\}$, and verify the matrix condition~(\ref{eq:IEcondi}) restricted to $\mathcal{S}_{\ell}$, i.e. $\lambda_{\min}\!\bigl(\mathcal{N}_{\ell}[\mathcal{S}_{\ell}]\bigr)\geq\delta_{\ell}$, whose first crossing yields the minimal active time $T_{\textrm{min},\ell}$ of~(\ref{eq:dwelltime_definition}), computed as prescribed by Remark~(\ref{rem:set_Tmin}). As shown in  Figure~\ref{fig:hiv_pe}, each therapy reshapes the genotype distribution and thus energizes the accumulated information matrix~(\ref{eq:Accum_infomatrix}). In every case, the smallest eigenvalue of the accumulated information on the excited subspace crosses the threshold $\delta_{\ell}$ inside the active interval window, with $T_{\textrm{min},0}\approx179$, $T_{\textrm{min},1}\approx189$, and $T_{\textrm{min},2}\approx184$ days, all below $\mathrm{AT}_{\ell}=365$ days; hence, the dwell-time constraint $\mathrm{AT}_{\ell}\geq T_{\textrm{min},\ell}$ of Definition~(\ref{def:dwell_time}) is satisfy for every mode. Notably, the resulting time windows admits a clinical reading in the digital twin sense: roughly six months of sustained therapy are enough for the twin to recalibrate the infection rates of the genotypes under each particular drug therapy.
}

\section{Conclusion} \label{s:conclusion}

In this paper we have explored a mode-wise design for the joint estimation of states and switched parameters in autonomous switched nonlinear systems. Rather than relying on a single observer structure, a dedicated adaptive observer is tailored to each switching mode and is active only during the corresponding active-time interval, which removes the zero-input disturbance that the switching behavior would otherwise inject into the parameter estimation dynamics. For every mode, local nonlinear observability and local structural identifiability were established, and a dwell-time condition on the active time was derived from a finite-window persistence-of-excitation requirement, guaranteeing the contractiveness of the parameter estimation error. \juan{Moreover, for every switched mode, a direct connection between the length of the active-time interval and the contractiveness of the parameter estimation error was established.} 

The approach is \juan{shown} first with an illustrative numerical example and subsequently on an HIV within-host model with immune response dynamics under a SWATCH therapy schedule. The numerical results show that the mode-wise adaptive observer is capable of accurately reconstructing the infected state trajectories and each of the infection-rate parameters, with the required minimal dwell time remaining well within the active-time intervals. \juan{For the digital twin perspective, the established dwell-time condition plays an important role, as it defines how long a drug therapy must be held for the twin to recalibrate itself before the next switching instant. Further work would be required to shorten this minimal time required for recalibration and therefore enable faster adaptive structures.}

\juan{Online adaptation is one of the fundamentals in the digital twin sphere. The computational model needs to be able to actively track the ever-evolving dynamics of its physical counterpart~\cite{Laubenbacher22,wright20}. Besides, for many of these real-world digital twin applications, we do not need exact convergence but boundedness, as there is inherent parameter uncertainty and stochasticity that is outside the existing mathematical modeling knowledge of biological systems. As it was mentioned at the introduction, current medical digital twin proposals still lean on batch-based identification, which in practice postpones the model updating and, in consequence, the reliability of the model \cite{laubenbacher24n,kovatchev25,cappon25}. We consider that adaptive observer algorithms hold the potential to naturally overcome this online adaptation issue. Their inherent capability to reconstruct hidden states and adjust model parameters, jointly and online, allow them to keep the digital twin synchronized to the physical system as its dynamics evolve.}

Future work will focus on relaxing the persistence-of-excitation requirement towards initial-excitation conditions, on the measurement-aware estimation of weakly detectable genotypes, and on the integration of the mode-wise observer with switching model predictive control formulations \cite{sereno25} for output-feedback evolutionary therapy design. The extension to nonlinearly parameterized switched systems is also a promising research direction.

\section{Acknowledgements}

\juan{This material is based on work supported by the National Science Foundation under Grant DMS-2439054.
}

\bibliographystyle{unsrt}
\bibliography{My_library.bib}

\end{document}